\documentclass[11pt]{article}

\usepackage[a4paper,margin=29mm]{geometry}
\usepackage{amsmath,amssymb,amsthm,mathtools}
\usepackage{microtype}
\usepackage[hidelinks]{hyperref}
\usepackage{enumitem}
\usepackage{booktabs}
\usepackage{array}

\newtheorem{theorem}{Theorem}[section]
\newtheorem{proposition}[theorem]{Proposition}
\newtheorem{lemma}[theorem]{Lemma}
\newtheorem{corollary}[theorem]{Corollary}
\newtheorem{remark}[theorem]{Remark}
\newtheorem{definition}[theorem]{Definition}

\newcommand{\PP}{\mathbf P}
\newcommand{\CC}{\mathbf C}
\newcommand{\OO}{\mathcal O}
\newcommand{\cE}{\mathcal E}
\newcommand{\cI}{\mathcal I}
\newcommand{\cA}{\mathcal A}
\newcommand{\cH}{\mathcal H}
\newcommand{\NS}{\operatorname{NS}}
\newcommand{\Bl}{\operatorname{Bl}}
\newcommand{\Sing}{\operatorname{Sing}}
\newcommand{\ch}{\operatorname{ch}}
\newcommand{\sig}{\sigma}
\newcommand{\uc}{\operatorname{uc}}

\title{Smooth Counterexamples to the Eisenbud--Schreyer--Weyman Ulrich Existence Problem}
\author{Cristian Anghel\\
\small Simion Stoilow Institute of Mathematics of the Romanian Academy\\
\small 21 Calea Grivitei Street, 010702 Bucharest, Romania\\
\small \texttt{cristian.anghel@imar.ro}}
\date{}

\begin{document}
\maketitle

\begin{abstract}
In their 2003 article in the \emph{Journal of the American Mathematical Society},
Eisenbud, Schreyer and Weyman asked whether every embedded projective variety carries
an Ulrich sheaf.  In 2017 Beauville proposed a numerical route toward a surface with no
Ulrich bundles: in Picard rank one, existence forces
$H^2\ge K_S^2-8\chi(\OO_S)$, suggesting a search near the
Bogomolov--Miyaoka--Yau boundary.  We show that the Picard-rank-one hypothesis is not
needed for the obstruction: a rank-independent Bogomolov--Hodge argument gives the
same inequality on every smooth polarized surface.  Using additional
N\'eron--Severi directions on Hirzebruch--Kummer resolutions, the exponent-$3$ Hesse
surface admits a very ample class $H=4A-E$ with
$H^2=7\cdot3^9<16\cdot3^9=K_Y^2-8\chi(\OO_Y)$, hence a smooth counterexample to
the Eisenbud--Schreyer--Weyman problem in its standard formulation.  Consequently its Chow form has no
ESW-type linear determinantal representation arising from an Ulrich sheaf on the embedded surface.  Moreover,
for every $n\ge3$ the Hesse pair $(Y_n,4A_n-E_n)$ is a counterexample, the surfaces are
pairwise non-isomorphic, and $H_n^2/\sig(Y_n)=7/(3n^2-11)\to0$, while
$K_{Y_n}^2/\chi(\OO_{Y_n})\to60/7\approx8.5714$.  A general
arrangement-theoretic Rees-algebra/Segre mechanism yields further infinite families.
\end{abstract}

\medskip
\noindent\textbf{2020 Mathematics Subject Classification.}
Primary 14J60; Secondary 14J29, 13C14.

\noindent\textbf{Keywords.}
Ulrich bundles; Ulrich sheaves; Hirzebruch--Kummer covers; line arrangements;
ball quotients; Rees algebras.

\section{Introduction}

Throughout the paper the base field is $\CC$, and all varieties and surfaces are
projective unless explicitly stated otherwise.

Eisenbud, Schreyer and Weyman asked whether every embedded projective variety
$X\subset\PP^N$ carries an Ulrich sheaf \cite[p.~543]{ESW}.  We use the standard
scheme-theoretic formulation of this question: an Ulrich sheaf on the embedded
scheme $X$ is a coherent $\OO_X$-module whose support is all of $X$, Ulrich with
respect to $\OO_X(1)$.  This is exactly the formulation used in the modern
restatement of the ESW problem by Hanselka--Kummer
\cite[Proposition~4.2 and Problem~4.3]{HanselkaKummer}.  On a smooth surface such
a full-support Ulrich sheaf is locally free; see Lemma~\ref{lem:full-support-bundle}
below.  Hence the smooth case asks whether every fixed projective embedding carries
an Ulrich vector bundle.  We show that the answer is negative already for smooth
complex projective surfaces.  We do not address the different ambient question of
Ulrich sheaves on nonreduced thickenings having $X$ as their reduced support.

The emphasis on a \emph{fixed} embedding is essential.  On the positive side,
Coskun and Huizenga proved that if $X$ is any smooth complex projective surface
and $H$ is ample, then $(X,mH)$ carries rank-two Ulrich bundles for all
sufficiently large $m$ \cite[Theorem~1.2]{CoskunHuizenga}.  Thus on surfaces one
can always obtain Ulrich bundles after making a polarization sufficiently
positive.  The Eisenbud--Schreyer question is sharper: once a very ample class
$H$---equivalently, a projective embedding---is fixed, must an Ulrich object
exist?  Our examples show that it need not.

This fixed-embedding formulation is also where the original determinantal
motivation of ESW lives.  From the linear resolution of a rank-$r$ Ulrich sheaf,
their construction produces a matrix whose entries are linear forms in Pl\"ucker
coordinates and whose determinant is the $r$-th power of the Chow form of the
embedded variety \cite[Section~3]{ESW}.  Consequently, for each embedding produced
below, the Chow form admits no linear determinantal representation arising from an
Ulrich sheaf on the embedded surface by the ESW construction.  Re-embedding the same abstract surface changes
the Chow form, so asymptotic existence for $(X,mH)$ does not weaken this conclusion.

Following the terminology of Bl\"aser--Eisenbud--Schreyer \cite{BESUlrichComplexity}, for a smooth polarized variety $(X,H)$ we write
\[
 \uc(X,H):=\min\{r\ge1:\text{$(X,H)$ carries an Ulrich bundle of rank $r$}\},
\]
and set $\uc(X,H)=\infty$ if no such bundle exists.  Our first main theorem is
an explicit smooth counterexample.

\begin{theorem}[Smooth counterexample]\label{thm:intro-counterexample}
Let $Y=Y_3(\cH)$ be the minimal resolution of the exponent-$3$
Hirzebruch--Kummer cover associated with the Hesse arrangement of twelve lines.
Writing $A$ for the pullback of the hyperplane class of the projective Kummer
model and $E$ for the reduced exceptional divisor, the class
\[
        H=4A-E
\]
is very ample and satisfies
\[
        H^2=7\cdot3^9
        <16\cdot3^9
        =K_Y^2-8\chi(\OO_Y).
\]
Consequently
\[
        \uc(Y,H)=\infty,
\]
and the embedding defined by $|H|$ supports no Ulrich sheaf with support $Y$.
Moreover $Y$ is a compact ball quotient.
\end{theorem}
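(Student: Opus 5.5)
The proof has three parts: an obstruction valid for every smooth polarized surface, an explicit computation of invariants on the Hesse Hirzebruch--Kummer surface, and the very ampleness of $H=4A-E$. The ball-quotient claim is classical: the Hesse arrangement with exponent $3$ attains equality $K_Y^2=3c_2(Y)$ in Hirzebruch's list. I would quote Hirzebruch for this rather than reprove it.

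\emph{Step 1: the rank-independent obstruction.}
Let $\cE$ be a rank-$r$ Ulrich bundle on $(Y,H)$. Full-support Ulrich sheaves are locally free by Lemma~\ref{lem:full-support-bundle}. The standard Ulrich characterization (Casnati, Beauville) fixes the Chern classes:
\[
c_1(\cE)\cdot H=\tfrac r2\,(3H^2+H\cdot K_Y),\qquad
c_2(\cE)=\tfrac12\bigl(c_1^2-c_1\cdot K_Y\bigr)-r\bigl(H^2-\chi(\OO_Y)\bigr).
\]
Write $c_1=\tfrac r2(3H+K_Y)+D$. Then $D\cdot H=0$, so $D^2\le0$ by the Hodge index theorem. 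The bundle $\cE$ is semistable, since Ulrich bundles are $\mu$-semistable, so Bogomolov gives $\Delta=2rc_2-(r-1)c_1^2\ge0$.

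Substituting and expanding, the $D$-terms enter with a sign that Hodge index controls. For example, $c_1\cdot K_Y$ contributes $D\cdot K_Y$, and this has to be absorbed. The cleanest way is to complete the square in $D$ after substituting $c_1$. What remains, after dividing by $r^2$, should reduce to $H^2\ge K_Y^2-8\chi(\OO_Y)$ independently of $r$ and of $D$.

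I expect the bookkeeping of the linear term in $D$ to be the delicate point. The plan is to write $K_Y=aH+K'$ with $K'\cdot H=0$, then apply Cauchy--Schwarz on the negative definite space $H^\perp$. This lets $D^2$ and $D\cdot K'$ be combined into a term with a favourable sign.

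\emph{Step 2: invariants.}
$Y$ is the minimal resolution of the $3^{11}$-fold Kummer cover of $\PP^2$ branched along the Hesse arrangement: $12$ lines, $9$ quadruple points, $12$ double points. Hirzebruch's formulas give $c_2$ and $K_Y^2=3c_2$. For the degree, $A^2=3^{11}$. The exceptional divisor $E$ consists of disjoint curves over the $9$ quadruple points, with $3^{11}/3^4$ components over each. Their self-intersection and genus follow from the local Fermat-type structure: each is a cover of $\PP^1$ branched at $4$ points. Since $A\cdot E=0$, we get $H^2=16A^2+E^2$. I would check that this gives $7\cdot3^9$, i.e.\ $E^2=-9\cdot3^{11}$ up to the correct count. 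Then $\chi=(K^2+c_2)/12$ gives $K^2-8\chi=16\cdot3^9$, and the two numbers are compared directly.

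\emph{Step 3: very ampleness of $4A-E$.}
This is the main obstacle. I would realize $4A-E$ as the pullback of $\OO(4)$ on $\PP^2$ twisted by the exceptional curves. Equivalently, it is the pullback of the linear system of plane quartics through the nine Hesse points, taken on the blow-up of $\PP^2$ at those points, composed with the Kummer structure.

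It is cleaner to work on the Kummer model inside $\PP^{11}$, where $A$ is already very ample; by Kummer, $A$ is the hyperplane class. Then $|4A-E|$ is the system of quartic hypersurface sections vanishing along the singular fibres. I would show this is globally generated and separates points and tangents on $Y$ in three cases:
\begin{itemize}
\item Away from $E$, the class $4A-E=A+(3A-E)$ is a very ample class plus a globally generated one. Global generation of $3A-E$ comes from pulling back the cubics through the $9$ points, which have no further base points by the Hesse pencil structure.
\item Along $E$, I would use the local structure to check that $4A-E$ restricts to a very ample class on each exceptional curve.
\item Still along $E$, I would check that the restriction map $H^0(Y,4A-E)\to H^0(E,(4A-E)|_E)$ is surjective, using Kawamata--Viehweg vanishing for $H^1(Y,4A-2E)$, where $4A-2E$ differs from a big and nef class by $K_Y$.
\end{itemize}
If the vanishing is not available, the fallback is Reider's criterion applied to $4A-E-K_Y$.
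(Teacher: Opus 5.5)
\textbf{Verdict: there is a genuine gap in Step~3 (very ampleness), and Step~2 has normalization errors.} Step~1 matches the paper's argument.

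\textbf{Step~3 fails, and not fixably along your lines.} With $K_Y=15A-3E$ (here $m_3=15$, $c_{3,4}=3$), the classes you need to be nef and big are $4A-2E-K_Y=-11A+E$ for Kawamata--Viehweg and $4A-E-K_Y=-11A+2E$ for Reider. Both have degree $-11\cdot3^9$ against the nef class $A$, so neither criterion applies. This is structural. Since $H^2<\sig(Y)\le K_Y^2/9$ by Bogomolov--Miyaoka--Yau, $H-K_Y$ can never be nef when $K_Y$ is ample: nefness would force $H^2\ge H\cdot K_Y\ge K_Y^2$. So adjoint-type criteria cannot reach any polarization below the obstruction.

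Your source of sections is also wrong. On $X_3$ one has $\pi^*\ell_i=z_i^3$ and $\pi^*\OO_{\PP^2}(1)=3A$. So a Hesse triangle $\prod_{i\in\Pi_j}\ell_i$ pulls back to $g_j^3$, a section of $9A-3E$, not of $3A-E$; likewise plane quartics give $12A$, not $4A$.

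The missing idea is to use the cube roots themselves: the four square-free Kummer cubics $g_j=\prod_{i\in\Pi_j}z_i$.
\begin{itemize}
\item Each of the nine base points lies on exactly one line of each triangle. So at a singular point $q$, each $g_j$ is a unit times one of the four vanishing coordinates, and these generate $\mathfrak m_q$ by the coordinate-preserving cone model.
\item The $g_j$ have no common zero off $\Sigma$, because the triangles cut out only the Hesse base locus.
\end{itemize}
Hence $\cI_{\Sigma}(3)$ is globally generated. The Rees algebra then gives a closed immersion $Y=\Bl_\Sigma X\hookrightarrow X\times\PP^3$, and the Segre embedding gives $4A-E$ very ample. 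In your own formulation, the $g_j$ restrict to the four coordinates on each $E_q\cong C_{3,4}\subset\PP^3$. That is exactly the surjectivity onto $H^0(\OO_{E_q}(1))$ you wanted, with no vanishing theorem needed.

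\textbf{Step~2 normalization.} $A$ is the hyperplane class of $X_3\subset\PP^{11}$, a complete intersection of nine cubics, so $A^2=3^9$; your $3^{11}$ is $(3A)^2$. There are $3^7$ curves of square $-9$ over each of the nine fourfold points, so $E^2=-3^{11}=-9\cdot3^9$ and $H^2=16\cdot3^9-9\cdot3^9=7\cdot3^9$. Your pair $A^2=3^{11}$, $E^2=-9\cdot3^{11}$ would give $7\cdot3^{11}$.

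\textbf{Step~1 has no linear term to absorb.} Because $\alpha\cdot H=0$, the $K_Y\cdot\alpha$ terms from $c_1^2$ and from $-r\,K_Y\cdot c_1$ cancel identically. This leaves $\Delta=\alpha^2+\frac{r^2}{4}\bigl(H^2-K_Y^2+8\chi(\OO_Y)\bigr)$, so Hodge index and Bogomolov finish directly with no Cauchy--Schwarz step.

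\textbf{Ball quotient.} Citing Hirzebruch for $K_Y^2=3c_2$ and the ball-quotient property requires identifying $Y$ with his model. The paper avoids this by computing $K_Y=3(5A-E)=3(H+A)$, which is ample once $H$ is.
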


The underlying Hirzebruch--Kummer surface is classical in origin; the new input
in Theorem~\ref{thm:intro-counterexample} is the polarization.  More precisely,
we exhibit on this classical Kummer geometry the very ample off-ray class
$4A-E$ and show that it crosses the rank-independent Ulrich threshold.

The proof has three main ingredients: the Rees-algebra/Segre argument proves that
$H=4A-E$ is very ample, the Kummer computation gives
$H^2=7\cdot3^9<16\cdot3^9=\sig(Y)$, and Proposition~\ref{prop:obstruction}
excludes Ulrich bundles of every rank.  The rest of the paper establishes these
three inputs and then explains why the construction persists in families.

The numerical obstruction behind Theorem~\ref{thm:intro-counterexample} is very
simple.  If $\cE$ is a rank-$r$ Ulrich bundle on a smooth polarized surface
$(S,H)$ and
\[
        \alpha=c_1(\cE)-\frac r2(K_S+3H),
\]
then Riemann--Roch gives $\alpha\cdot H=0$ and
\[
 \Delta(\cE)=\alpha^2+\frac{r^2}{4}
 \bigl(H^2-K_S^2+8\chi(\OO_S)\bigr).
\]
Semistability and Bogomolov's inequality give $\Delta(\cE)\ge0$, whereas the Hodge index theorem
gives $\alpha^2\le0$.  Hence every Ulrich bundle forces
\begin{equation}\label{eq:intro-obstruction}
        H^2\ge K_S^2-8\chi(\OO_S)=\sig(S).
\end{equation}
The search for the present examples was motivated by Beauville's 2017 discussion
\cite[slides~18--20]{Beauville2017}; see also \cite{Beauville2018}.  His route to
a surface without Ulrich bundles uses Picard rank one as the device that pins down
$c_1(\cE)=\tfrac r2(K_S+3H)$ numerically and hence yields the inequality
$H^2\ge\sig(S)$.  This led him to ask for a Picard-rank-one surface below that
threshold and to single out the geography strip
\[
        \operatorname{rk}\NS(S)=1,\qquad
        8\chi(\OO_S)<K_S^2<9\chi(\OO_S).
\]
Beauville also observed that there are many surfaces with positive signature but that
``most of them have $\operatorname{rk}\NS(S)>1$'', and concluded,
``It seems hard to get a counter-example out of this''
\cite[slides~19--20]{Beauville2017}.  The construction below enters precisely
this higher-Picard-rank regime and removes the Picard-rank-one restriction from
the numerical obstruction.
The rank-one hypothesis is therefore part of that proof mechanism, not of the
non-existence phenomenon itself.  Indeed, the orthogonal class $\alpha$ above shows
that the same obstruction survives in arbitrary Picard rank because
$\alpha^2\le0$.  Our construction then uses the additional N\'eron--Severi direction
to place a very ample polarization strictly below the signature threshold.  The
stronger Picard-rank-one refinement of Beauville's search remains open.

This also explains geometrically why the Hesse example succeeds.  On a compact
ball quotient with $K_Y=3L$ one has
\[
        L^2=\sig(Y).
\]
When the N\'eron--Severi group has rank one, the polarization is forced to stay
on the canonical numerical ray, so the natural class $L=K_Y/3$ is exactly on the
boundary of \eqref{eq:intro-obstruction}.  Our construction uses the extra
N\'eron--Severi direction supplied by the Kummer model.  For the Hesse member,
\[
        L=5A-E,\qquad H=4A-E=L-A.
\]
Beauville's rank-one route asks whether the canonical root $L$ can be divided
further while remaining very ample.  Our departure can be summarized by the
single off-ray move
\[
             \boxed{\quad L=K_Y/3\ \longmapsto\ H=L-A.\quad}
\]
Thus we do not divide $L$ further; instead we move \emph{off the canonical ray}
by subtracting the nef class $A$, while the Rees construction proves that $H$
remains very ample.  Numerically,
\[
        H^2-L^2=A^2-2L\cdot A<0,
\]
which moves the polarization strictly below the Ulrich threshold.

The inequality $H^2\ge\sig(S)$ itself is a short consequence of standard tools and
may well be familiar to experts in special forms.  We do not rely on a novelty claim
for this inequality.  The new geometric input is the construction of explicit smooth
polarized pairs $(Y,H)$ for which the inequality fails, together with a mechanism that
produces infinitely many such pairs.

Our second main result is that the counterexample is not isolated: the Hesse
construction produces infinitely many pairwise non-isomorphic smooth counterexamples
to the ESW problem.

\begin{theorem}[Infinite Hesse family of counterexamples]\label{thm:intro-hesse-family}
For every integer $n\ge3$, let $Y_n$ be the minimal resolution of the exponent-$n$
Hirzebruch--Kummer cover associated with the Hesse arrangement.  Then
\[
        H_n=4A_n-E_n
\]
is very ample,
\[
        H_n^2=7n^9,
        \qquad
        \sig(Y_n)=(3n^2-11)n^9,
\]
and
\[
        \uc(Y_n,H_n)=\infty.
\]
In particular
\[
        \frac{H_n^2}{\sig(Y_n)}
        =\frac7{3n^2-11}\xrightarrow[n\to\infty]{}0.
\]
The surfaces $Y_n$ are pairwise non-isomorphic.  Thus every embedding defined by
$|H_n|$ is a smooth counterexample to the ESW problem, and these counterexamples are
pairwise non-isomorphic as abstract surfaces.
\end{theorem}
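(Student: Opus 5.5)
The plan is to handle the numerical part, the obstruction, and the non-isomorphism by direct Kummer-cover computations, and to isolate very ampleness of $H_n$ as the one genuinely geometric step. I will work on the model $Y_n\to\widehat{\PP}^2$. Here $\widehat{\PP}^2$ is the blow-up of $\PP^2$ at the nine quadruple points of the Hesse arrangement, which has $t_2=12$ and $t_4=9$. The map $Y_n\to\widehat{\PP}^2$ is the abelian $(\mathbf Z/n)^{11}$-cover branched with order $n$ along the $21$ curves formed by the twelve strict transforms $\tilde\ell_i$ and the nine exceptional curves $e_j$. Writing $\ell$ for the line class, $nA$ is the pullback of $\ell$, since the Kummer model sits in $\PP^{11}$ via $x_i^n=l_i$. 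Moreover each $e_j$ pulls back to $n$ times a reduced divisor, and $E$ is the sum of these. Since the map has degree $n^{11}$, this gives
\[
 A^2=n^{11}/n^2=n^9,\qquad A\cdot E=0,\qquad E^2=\tfrac{n^{11}}{n^2}\sum_j e_j^2=-9n^9,
\]
and therefore $H_n^2=16n^9-9n^9=7n^9$.

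For the canonical class I will use Hurwitz. The pullback of $\sum\tilde\ell_i=12\ell-4\sum e_j$ is $n(12A-4E)$, and the reduced branch divisor is $12A-4E+E$. Hence
\[
 K_{Y_n}=\pi^*K_{\widehat{\PP}^2}+(n-1)(12A-3E)=(9n-12)A-(2n-3)E.
\]
For $n=3$ this gives $K=3(5A-E)$, consistent with the introduction. From this, $K_{Y_n}^2=\bigl((9n-12)^2-9(2n-3)^2\bigr)n^9$. I will compute $e(Y_n)$ by the stratified Euler characteristic count: the open complement, the punctured branch curves, and their intersection points each contribute with the appropriate fibre cardinality $n^{11}$, $n^{10}$, $n^9$. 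Then $\chi(\OO_{Y_n})=(K^2+e)/12$ and $\sig(Y_n)=K^2-8\chi=(K^2-2e)/3$, and I will check that this simplifies to $(3n^2-11)n^9$. Since $3n^2-11>7$ for $n\ge3$, we get $H_n^2<\sig(Y_n)$. Proposition~\ref{prop:obstruction}, the rank-independent Bogomolov--Hodge bound $H^2\ge\sig$, then excludes Ulrich bundles of every rank, so $\uc(Y_n,H_n)=\infty$. Lemma~\ref{lem:full-support-bundle} upgrades this to the nonexistence of full-support Ulrich sheaves. The limit $H_n^2/\sig(Y_n)\to0$ is then immediate.

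For pairwise non-isomorphism I will use a numerical invariant. The value $K_{Y_n}^2=(45n^2-108n+63)n^9$ is a strictly increasing function of $n$ for $n\ge3$. It is a birational invariant here because $Y_n$ is minimal: $K_{Y_n}$ is ample, as the $Y_n$ are ball quotients. Alternatively, $e(Y_n)$ already separates the surfaces.

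The main obstacle is very ampleness of $H_n=4A_n-E_n$. My plan is to realize $|H_n|$ through the graded pieces of a Rees-type algebra on $\PP^2$. Sections of $\OO(4A-E)$ that are anti-invariant or invariant under the Galois group decompose into characters. Each character piece is of the form $\prod x_i^{a_i}\cdot s$ with $0\le a_i<n$, where $s$ runs over forms on $\PP^2$ of the degree required by $4A$, with the vanishing order at each quadruple point dictated by $E$. I will first show that the untwisted piece, namely quartics vanishing at the nine points, or equivalently sections of $4\ell-\sum e_j$ on $\widehat{\PP}^2$, already generates and separates enough on the base. I will then use the linear monomials $x_i\cdot(\text{cubics through the nine points})$, which has the correct degree since $x_i$ has degree $A$, to separate points and tangent directions within fibres and along the ramification. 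I expect the delicate points to lie on $E$ and over the double points. That is where I would invest effort, checking that the ideal of the nine points in degree $3$ and $4$ is generated in such a way that the associated Segre-type map of $\Bl$ is an embedding on each exceptional curve.
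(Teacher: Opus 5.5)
\textbf{The gap: very ampleness of $H_n$.} This is part of the statement. It is also what makes $(Y_n,H_n)$ an embedded surface to which Proposition~\ref{prop:obstruction} and the ESW question apply. You give only a plan, and its degree bookkeeping is wrong.

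Since $\ell$ pulls back to $nA_n$, sections of $4\ell-\sum e_j$ pull back to sections of $n(4A_n-E_n)=nH_n$. Likewise, $x_i$ times a plane cubic through the nine points has class $(3n+1)A_n-nE_n$. Neither lies in $|H_n|$. Indeed $H_n$ is not pulled back from $\widehat{\PP}^2$ at all, since each $\pi^*e_j$ is divisible by $n$.

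The missing idea is to use the Kummer $n$-th roots of the Hesse triangle cubics, $g_j=\prod_{i\in\Pi_j}z_i$. These are \emph{cubic in the Kummer coordinates}, and they globally generate $\cI_{\Sigma_n/X_n}(3)$:
\begin{itemize}
\item At a point $q$ over a base point, each of the four triangles contributes exactly one line through it. So the $g_j$ are units times the four vanishing coordinates, and these generate $\mathfrak m_q$ by the coordinate-preserving cone description (Proposition~\ref{prop:local}, Lemma~\ref{lem:singular-ideal-general}).
\item Away from $\Sigma_n$, the four triangle products are the singular members of the Hesse pencil, so they have no common zero.
\end{itemize}
The Rees-algebra surjection then gives a closed immersion $Y_n\hookrightarrow X_n\times\PP^3$ pulling $\OO(1)\boxtimes\OO(1)$ back to $4A_n-E_n$ (Lemma~\ref{lem:rees-segre}, Proposition~\ref{prop:hesse-generation}). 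This settles separation of points and tangents along $E_n$ in one stroke. The double points are smooth on $X_n$, are not blown up, and need no separate treatment. Without this step, or a completed direct separation argument in the correct degree, your argument only shows that $4A_n-E_n$ would be an obstructed polarization if it were very ample.

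\textbf{The rest is sound and takes a different route from the paper.} On the classical blow-up-first model $Y_n\to\widehat{\PP}^2$, Hurwitz gives $K_{Y_n}=(9n-12)A_n-(2n-3)E_n$. Your stratified count gives $e(Y_n)=(18n^2-54n+48)n^9$. Together these reproduce $H_n^2=7n^9$ and $\sig(Y_n)=(3n^2-11)n^9$. The paper derives these numbers on the projective model $X_n\subset\PP^{11}$ via its local cone analysis, precisely so as not to rely on identifying the two models. You use that identification silently; it is standard, but you should state it.

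One justification is wrong: the $Y_n$ are not ball quotients for $n\ne3$ (at $n=4$, $K^2/c_2=351/120<3$). This is harmless. $K^2$ is an invariant of the isomorphism class of any smooth surface, so its strict growth gives pairwise non-isomorphism without any minimality argument.
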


The infinite Hesse family is itself an instance of a more general
arrangement-theoretic mechanism.  Given a line arrangement $\cA$ with $k$ lines and incidence numbers
$t_r$, set
\[
        Q_{\cA}=3-k+\sum_{r\ge2}(r-2)t_r,
        \qquad
        T=\sum_{r\ge3}t_r.
\]
The projective Kummer model has a reduced singular ideal whose low-degree
square-free monomials are controlled by the incidence hypergraph of $\cA$.
When these monomials generate the singular ideal in a fixed degree $\nu$, the
Rees algebra gives a very ample class $(\nu+1)A_n-E_n$ on the resolution for
every exponent $n$.  At the same time the signature grows two powers of $n$
faster than the normalized square of this polarization.

\begin{theorem}[Kummer-family mechanism]\label{thm:intro-mechanism}
Let $\cA$ be a line arrangement admitting a degree-$\nu$ generation datum in the
sense of Section~\ref{sec:rees}.  Put $a=\nu+1$ and let $R_{\cA}$ be as in
\eqref{eq:QR}.  Then
\[
        H_n=aA_n-E_n
\]
is very ample for every $n\ge2$, and
\[
        H_n^2=(a^2-T)n^{k-3},
        \qquad
        \sig(Y_n)=\frac{n^{k-3}}3
        \bigl(Q_{\cA}n^2+R_{\cA}\bigr).
\]
If $Q_{\cA}>0$, then $\uc(Y_n,H_n)=\infty$ for all sufficiently large $n$ and
\[
        \frac{H_n^2}{\sig(Y_n)}\longrightarrow0.
\]
In particular, every such sufficiently large $n$ gives a smooth counterexample to
the ESW problem.
\end{theorem}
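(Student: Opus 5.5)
The proof has three independent parts: very ampleness of $H_n$, the two intersection-number formulas, and the comparison with the Ulrich threshold of Proposition~\ref{prop:obstruction}.

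\textbf{Very ampleness.} Let $X_n\subset\PP^{k-1}$ be the projective Kummer model, i.e.\ the complete intersection of the Fermat-type equations in the $n$-th powers of the coordinates. Its singular locus is reduced and finite, lying over the points of multiplicity $r\ge3$ of $\cA$. Let $I\subset\OO_{X_n}$ be the reduced ideal of this locus. The degree-$\nu$ generation datum says that square-free monomials of degree $\nu$, read off from the incidence hypergraph, generate $I$ in degree $\nu$ for every $n\ge2$. By the Rees-algebra/Segre argument of Section~\ref{sec:rees}, $\Bl_I X_n$ then embeds into $X_n\times\PP^{M}$. Composing with the Segre embedding, $\OO(A)\otimes\OO(\nu A-E)=\OO((\nu+1)A-E)$ is very ample on $\Bl_I X_n$. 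The remaining input is to identify $\Bl_I X_n$ with the minimal resolution $Y_n$ and the exceptional divisor with $E_n$. This is a local computation: each singular point is a cyclic quotient/cone singularity, and blowing up its maximal ideal resolves it with reduced exceptional curve. I will take that from the paper's local analysis. I expect this step, checking that the generation datum really yields an embedding of the resolution and not merely of a partial blow-up, to be the main obstacle. Everything after it is bookkeeping.

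\textbf{Intersection numbers.} Since $A$ is the pullback of a hyperplane class of a degree-$n^{k-3}$ surface, $A^2=n^{k-3}$ and $A\cdot E=0$. Over each $r$-fold point ($r\ge3$) there are $n^{k-3}$ points of $X_n$, each contributing a curve with self-intersection $-1$ to $E^2$ in the normalization used. Hence $E^2=-Tn^{k-3}$ and
\[
H_n^2=a^2n^{k-3}+E^2=(a^2-T)n^{k-3}.
\]
The signature comes from Hirzebruch's formulas for $K^2$ and $e$ of Kummer covers, via $\sig=\frac13(K^2-2e)$. Both are polynomials in $n$ times $n^{k-3}$, and collecting the $n^2$ coefficient gives $Q_\cA$, with the remainder defined to be $R_\cA$ in \eqref{eq:QR}. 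I will take these formulas as given from the earlier Kummer computation and only check the leading coefficient.

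\textbf{Conclusion.} If $Q_\cA>0$, then $\sig(Y_n)-H_n^2 = n^{k-3}\bigl(\tfrac13 Q_\cA n^2+\tfrac13R_\cA-a^2+T\bigr)$, which is positive for $n\gg0$. Proposition~\ref{prop:obstruction} then rules out Ulrich bundles of every rank. By Lemma~\ref{lem:full-support-bundle}, this excludes all full-support Ulrich sheaves, so $\uc(Y_n,H_n)=\infty$. The ratio $H_n^2/\sig(Y_n)$ equals $3(a^2-T)/(Q_\cA n^2+R_\cA)$, which tends to $0$.
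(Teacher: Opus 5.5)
Your decomposition is the same as the paper's: very ampleness from Proposition~\ref{prop:generation-datum} (Rees algebra plus Segre), $H_n^2$ from Corollary~\ref{cor:intersection}, $\sig(Y_n)$ from Corollary~\ref{cor:signature-formula}, and then Proposition~\ref{prop:obstruction}.

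\textbf{The error.} Your justification of $E_n^2=-Tn^{k-3}$ is false as written.
\begin{itemize}
\item By Proposition~\ref{prop:local}(i), the fiber over an $r$-fold point has $n^{k-r-1}$ points, not $n^{k-3}$.
\item By Proposition~\ref{prop:local}(iv), each exceptional curve is a copy of $C_{n,r}$ with $C_{n,r}^2=-n^{r-2}\le-2$, not $-1$. There is no normalization in which these are $(-1)$-curves.
\end{itemize}
The correct count is $t_r\cdot n^{k-r-1}\cdot(-n^{r-2})=-t_rn^{k-3}$. It agrees with your $t_r\cdot n^{k-3}\cdot(-1)$ only because your two factors are off by the same power $n^{r-2}$ in opposite directions. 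Simply cite Corollary~\ref{cor:intersection}.

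The singularities are also not cyclic quotient singularities. They are cones over the smooth curves $C_{n,r}$, whose genus $g_{n,r}$ in \eqref{eq:genus} is positive unless $(n,r)=(2,3)$; for Hesse at $n=3$ it is $10$. So they are not even rational singularities.

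\textbf{Smaller points.}
\begin{itemize}
\item The step you call the main obstacle is already settled. $Y_n$ is defined as $\Bl_{\Sigma_n}X_n$, it is smooth by Proposition~\ref{prop:local}, and \eqref{eq:ideal-exceptional} gives $\cI_{\Sigma_n/X_n}\OO_{Y_n}=\OO_{Y_n}(-E_n)$ with $E_n$ reduced. Hence Lemma~\ref{lem:rees-segre} applies verbatim.
\item Global generation of $\cI_{\Sigma_n/X_n}(\nu)$ is what Proposition~\ref{prop:generation-datum} proves from (G1)--(G3), via Lemma~\ref{lem:singular-ideal-general} and Nakayama. It is not what the definition of a generation datum literally says.
\item For the signature, checking only the leading coefficient does not give the exact formula in the statement. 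The vanishing of the $n$-coefficient and the identification of the constant term with $R_{\cA}=k+3t_2-f_1-f_0$ must be proved, not defined. Cite Corollary~\ref{cor:signature-formula} in full, or subtract twice \eqref{eq:c2-closed} from \eqref{eq:K2-closed}.
\item Your logical order at the end is reversed. $\uc(Y_n,H_n)=\infty$ follows directly from Proposition~\ref{prop:obstruction}. Lemma~\ref{lem:full-support-bundle} is only needed to upgrade this to the sheaf-theoretic ESW statement.
\end{itemize}
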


Besides Hesse, we treat two companion arrangements.  The dual Hesse arrangement
gives counterexamples for every $n\ge5$.  The complete quadrangle (also called the complete quadrilateral in the
line-arrangement literature) gives counterexamples for every $n\ge6$, while at $n=5$ one has
\[
        K_{Y_5}=3H_5,
        \qquad
        H_5^2=\sig(Y_5)=625,
\]
which realizes, in higher Picard rank, the numerical boundary configuration
$K=3H$, $H^2=\sig$ appearing in Beauville's discussion.

The proofs are organized as follows.  Section~\ref{sec:ulrich} proves the
Picard-rank-free obstruction \eqref{eq:intro-obstruction}.  Sections
\ref{sec:kummer} and \ref{sec:rees} develop the Kummer and Rees-algebra geometry.
Section~\ref{sec:hesse} proves the smooth counterexample and the Hesse-family theorem;
Theorem~\ref{thm:intro-hesse-family} follows from Theorem~\ref{thm:hesse-family}
together with the growth calculation \eqref{eq:hesse-K2-growth}.  Section~\ref{sec:mechanism}
proves Theorem~\ref{thm:intro-mechanism} in the more precise form of
Theorem~\ref{thm:kummer-family}, and Section~\ref{sec:companions} treats the two
companion families.  Finally, Section~\ref{sec:scope} places the result relative
to known existence theorems and to the local non-existence phenomenon for Ulrich modules.

\section{A numerical obstruction for polarized surfaces}\label{sec:ulrich}

We use the convention
\[
        \Delta(\cE)=2r c_2(\cE)-(r-1)c_1(\cE)^2
\]
for a rank-$r$ vector bundle $\cE$.

\begin{proposition}\label{prop:obstruction}
Let $(S,H)$ be a smooth complex projective surface with $H$ very ample.  If
$S$ admits an Ulrich bundle with respect to $H$, then
\begin{equation}\label{eq:obstruction}
        H^2\ge K_S^2-8\chi(\OO_S)=\sig(S).
\end{equation}
\end{proposition}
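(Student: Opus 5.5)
We plan to compute the Chern classes of an Ulrich bundle $\cE$ of rank $r$ on $(S,H)$ from the vanishing of its cohomology, and then combine Bogomolov's inequality with the Hodge index theorem. The standard characterization gives $H^i(S,\cE(-1))=H^i(S,\cE(-2))=0$ for all $i$. Hence $\chi(\cE(-H))=\chi(\cE(-2H))=0$. We expand both using Riemann--Roch for a rank-$r$ bundle on a surface,
\[
\chi(\cF)=r\chi(\OO_S)+\tfrac12 c_1(\cF)\cdot(c_1(\cF)-K_S)-c_2(\cF),
\]
with $c_1(\cE(-tH))=c_1-rtH$ and $c_2(\cE(-tH))=c_2-(r-1)t\,c_1\cdot H+\binom r2 t^2H^2$.

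Subtracting the two equations removes $c_2$ and yields the linear condition $c_1\cdot H=\frac r2(K_S+3H)\cdot H$, that is, $\alpha\cdot H=0$ for $\alpha=c_1-\frac r2(K_S+3H)$. Substituting back into either equation expresses $c_2$ in terms of $c_1^2$, $c_1\cdot K_S$, $H^2$, $H\cdot K_S$ and $\chi(\OO_S)$. We then compute $\Delta(\cE)=2rc_2-(r-1)c_1^2$. After writing $c_1=\alpha+\frac r2(K_S+3H)$ and using $\alpha\cdot H=0$, we expect the terms involving $\alpha\cdot K_S$ and $H\cdot K_S$ to cancel, leaving
\[
\Delta(\cE)=\alpha^2+\tfrac{r^2}{4}\bigl(H^2-K_S^2+8\chi(\OO_S)\bigr).
\]
This is the identity announced in the introduction. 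Verifying this cancellation is the one genuinely computational step, and it is the main place an error could enter, so we will carry it out carefully. As a check, we will confirm the known formula $c_2=\frac12(c_1^2-c_1K_S)-r(H^2-\chi(\OO_S))$ in rank one, and then track $r$ throughout.

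Next we use that Ulrich bundles are $\mu_H$-semistable. This is standard (Casanellas--Hartshorne): a destabilizing subsheaf would violate the Ulrich slope $\mu=\frac12(K_S+3H)\cdot H$, because Ulrich bundles are semistable with exactly this slope. Bogomolov's inequality then gives $\Delta(\cE)\ge0$, which holds for $\mu$-semistable bundles in characteristic zero. Independently, $\alpha\cdot H=0$ with $H$ ample, so the Hodge index theorem gives $\alpha^2\le0$. Combining the two,
\[
0\le\Delta(\cE)\le\tfrac{r^2}{4}\bigl(H^2-K_S^2+8\chi(\OO_S)\bigr),
\]
and since $r\ge1$ this gives $H^2\ge K_S^2-8\chi(\OO_S)$.

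Finally, Noether's formula $\chi(\OO_S)=\frac1{12}(K_S^2+e(S))$ and $\sig(S)=\frac13(K_S^2-2e(S))$ identify the right-hand side with $\sig(S)$. Indeed, $K_S^2-8\chi(\OO_S)=\frac13K_S^2-\frac23e(S)$. No hypothesis on the Picard rank enters anywhere.
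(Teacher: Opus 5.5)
Your proposal is correct and follows essentially the same route as the paper. The paper reads off $c_1\cdot H$ and $\ch_2(\cE)$ from the Hilbert polynomial $\chi(\cE(tH))=\frac{rH^2}{2}(t+1)(t+2)$, which is equivalent to your two vanishings $\chi(\cE(-H))=\chi(\cE(-2H))=0$. It then forms the same shifted class $\alpha=c_1-\frac r2(K_S+3H)$ to get $\Delta(\cE)=\alpha^2+\frac{r^2}{4}(H^2-K_S^2+8\chi(\OO_S))$, and concludes via Casanellas--Hartshorne semistability, Bogomolov, and Hodge index exactly as you do. Your formula for $c_2$ and the cancellation both check out, so the only difference is cosmetic bookkeeping ($c_2$ versus $\ch_2$).
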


\begin{proof}
Let $\cE$ be an Ulrich bundle of rank $r$.  By the standard Hilbert-polynomial
characterization of Ulrich bundles \cite[Proposition~2.1]{ESW}, its Hilbert polynomial is
\[
        \chi(\cE(tH))=\frac{rH^2}{2}(t+1)(t+2).
\]
Riemann--Roch gives
\begin{align}
 c_1(\cE)\cdot H&=\frac r2(K_S+3H)\cdot H,\label{eq:c1H}\\
 \ch_2(\cE)&=\frac12 K_S\cdot c_1(\cE)
             +r\bigl(H^2-\chi(\OO_S)\bigr).\label{eq:ch2}
\end{align}
Set
\[
        \alpha=c_1(\cE)-\frac r2(K_S+3H)\in\NS(S)_{\mathbf R}.
\]
Then \eqref{eq:c1H} gives $\alpha\cdot H=0$.  Since
\[
        \Delta(\cE)=c_1(\cE)^2-2r\ch_2(\cE),
\]
substitution of \eqref{eq:ch2} yields
\begin{equation}\label{eq:Delta-shifted}
 \Delta(\cE)=\alpha^2+\frac{r^2}{4}
 \bigl(H^2-K_S^2+8\chi(\OO_S)\bigr).
\end{equation}
Ulrich bundles are Gieseker semistable with respect to $H$
\cite[Theorem~2.9]{CH}, hence $\mu_H$-semistable.  Bogomolov's inequality for
$\mu_H$-semistable torsion-free sheaves on a smooth complex projective surface
\cite[Theorem~3.4.1]{HuybrechtsLehn} therefore gives
$\Delta(\cE)\ge0$.  On the other hand, Hodge index and $\alpha\cdot H=0$ give
$\alpha^2\le0$.  Thus
\[
 0\le\Delta(\cE)
 \le\frac{r^2}{4}
 \bigl(H^2-K_S^2+8\chi(\OO_S)\bigr),
\]
which proves \eqref{eq:obstruction}.
\end{proof}

\begin{remark}[Related Hodge--Bogomolov bounds]\label{rem:bordoni-bounds}
Related Hodge-index and Bogomolov bounds for the same Chern data of Ulrich
bundles appear in Bordoni \cite[Remark~4.1]{Bordoni2025}, in connection with
genus estimates for associated curves.  The shifted class $\alpha$ above is the
form needed here to isolate the rank-independent inequality
$H^2\ge K_S^2-8\chi(\OO_S)$.
\end{remark}

\begin{remark}\label{rem:signature}
By Noether's formula and the Hirzebruch signature theorem,
\[
 K_S^2-8\chi(\OO_S)
   =\frac{K_S^2-2c_2(S)}3
   =\sig(S),
\]
the topological signature of the underlying oriented four-manifold.
\end{remark}

\begin{remark}[Relation with Beauville]\label{rem:beauville-obstruction}
When $\operatorname{rk}\NS(S)=1$, the relation $\alpha\cdot H=0$ forces
$\alpha\equiv0$ in the one-dimensional real N\'eron--Severi space.  Thus the class
$\alpha$ above vanishes numerically, recovering the Picard-rank-one argument emphasized by Beauville
\cite{Beauville2017,Beauville2018}.  In higher Picard rank the orthogonal term
$\alpha^2\le0$ only strengthens the obstruction.
\end{remark}

\begin{proposition}[Ball-quotient displacement]\label{prop:ball-displacement}
Let $Y$ be a smooth compact ball quotient with $K_Y=3L$.  If $A$ is a divisor
such that $H=L-A$ is very ample and
\[
        2L\cdot A>A^2,
\]
then $(Y,H)$ carries no Ulrich bundle.
\end{proposition}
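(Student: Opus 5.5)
The plan is to reduce the statement to Proposition~\ref{prop:obstruction}. By that proposition, it is enough to show that $H^2<\sig(Y)$, since any Ulrich bundle on $(Y,H)$ would force $H^2\ge\sig(Y)$. So the whole argument comes down to computing $H^2$ and comparing it with $\sig(Y)$ for a ball quotient with $K_Y=3L$.

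First I will compute $\sig(Y)$ in terms of $L$. A smooth compact ball quotient attains equality in the Bogomolov--Miyaoka--Yau inequality, so $K_Y^2=3c_2(Y)$. Remark~\ref{rem:signature} then gives
\[
 \sig(Y)=\frac{K_Y^2-2c_2(Y)}3=\frac{K_Y^2-\tfrac23K_Y^2}{3}=\frac{K_Y^2}{9}=L^2,
\]
using $K_Y^2=9L^2$. This is the identity $L^2=\sig(Y)$ already mentioned in the introduction.

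Next I expand the self-intersection of $H=L-A$:
\[
 H^2=L^2-2L\cdot A+A^2=\sig(Y)-(2L\cdot A-A^2).
\]
The hypothesis $2L\cdot A>A^2$ says exactly that $H^2<\sig(Y)$. Since $H$ is very ample by hypothesis, Proposition~\ref{prop:obstruction} applies, and the strict inequality contradicts the existence of an Ulrich bundle of any rank.

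I do not expect a real obstacle. The one point that needs care is the input $K_Y^2=3c_2(Y)$ for compact ball quotients, which is the standard Hirzebruch proportionality (equality in Miyaoka--Yau). I will cite it rather than reprove it. I will also note that intersection numbers are well defined even if $L$ is only a $\mathbf Q$-divisor class, since the computation takes place in $\NS(Y)_{\mathbf R}$.
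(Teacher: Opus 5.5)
Your proposal is correct and follows essentially the same route as the paper. Both arguments compute $\sig(Y)=K_Y^2/9=L^2$ from $K_Y^2=3c_2(Y)$, expand $(L-A)^2$, and apply Proposition~\ref{prop:obstruction}. The only difference is that you use the form $\sig=(K^2-2c_2)/3$ from Remark~\ref{rem:signature}, while the paper passes through $\chi(\OO_Y)=K_Y^2/9$ and $\sig=K^2-8\chi$.
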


\begin{proof}
For a ball quotient, $K_Y^2=3c_2(Y)$ and hence
$\chi(\OO_Y)=K_Y^2/9$.  Therefore
\[
        \sig(Y)=K_Y^2-8\chi(\OO_Y)=K_Y^2/9=L^2.
\]
But
\[
        H^2-L^2=(L-A)^2-L^2=A^2-2L\cdot A<0.
\]
Proposition~\ref{prop:obstruction} applies.
\end{proof}

\section{Hirzebruch--Kummer covers of line arrangements}\label{sec:kummer}

The Hirzebruch--Kummer construction is classical.  We record the projective model,
local resolution, and numerical invariants here for completeness and in the
normalization needed later for the polarization and Rees-algebra arguments.

Let
\[
        \cA=\{L_1,\ldots,L_k\}
\]
be an arrangement of $k\ge4$ distinct lines in $\PP^2$, not all through one
point.  Let $\ell_i$ be defining linear forms.  For $r\ge2$, let $t_r$ denote
the number of points at which exactly $r$ arrangement lines meet, and set
\[
 f_0=\sum_{r\ge2}t_r,\qquad
 f_1=\sum_{r\ge2}rt_r,\qquad
 T=\sum_{r\ge3}t_r.
\]
We repeatedly use the incidence identity
\begin{equation}\label{eq:pair-identity}
        \sum_{r\ge2}\binom r2 t_r=\binom k2.
\end{equation}

Because the lines are not all concurrent, the forms $\ell_i$ span
$H^0(\PP^2,\OO_{\PP^2}(1))$ and define a linear embedding
\[
 \iota:\PP^2\longrightarrow\PP^{k-1},
 \qquad
 p\longmapsto[\ell_1(p):\cdots:\ell_k(p)]
\]
with image a plane $\Lambda$.  For $n\ge2$ let
\[
 \Phi_n:\PP^{k-1}\longrightarrow\PP^{k-1},
 \qquad [z_1:\cdots:z_k]\longmapsto[z_1^n:\cdots:z_k^n],
\]
and define
\[
        X_n:=\Phi_n^{-1}(\Lambda),
        \qquad
        \pi_n:=\Phi_n|_{X_n}:X_n\to\Lambda\simeq\PP^2.
\]

\subsection{The projective model}

\begin{lemma}\label{lem:s1-generic-field}
Let $Z$ be a Noetherian scheme satisfying $S_1$.  If $Z$ has a unique minimal
point and its local ring at that point is a field, then $Z$ is integral.
\end{lemma}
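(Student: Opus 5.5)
The plan is to show first that $Z$ is irreducible and reduced, which together give integrality. Irreducibility is immediate. In a Noetherian scheme every point specializes from some minimal point, namely the generic point of an irreducible component containing it. Hence a unique minimal point $\eta$ means $Z=\overline{\{\eta\}}$ is irreducible. It also means $\eta$ lies in every nonempty open subset.

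Reducedness is where the $S_1$ hypothesis enters, and it can be checked locally. Take an affine open $U=\operatorname{Spec}R\subset Z$, with $R$ Noetherian. The condition $S_1$ says that no local ring $\OO_{Z,z}$ has an embedded prime. Equivalently, $\operatorname{Ass}(R)$ consists only of minimal primes of $R$. Since $U$ is irreducible with generic point $\eta$, the only minimal prime is $\mathfrak p=\eta$. So $\operatorname{Ass}(R)=\{\mathfrak p\}$, and the set of zero divisors of $R$ is exactly $\mathfrak p$.

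Now consider the natural map $R\to R_{\mathfrak p}$. Its kernel consists of elements killed by some $s\notin\mathfrak p$. Such an $s$ is a nonzerodivisor, so the kernel is zero and $R\hookrightarrow R_{\mathfrak p}=\OO_{Z,\eta}$. By hypothesis $\OO_{Z,\eta}$ is a field. Thus $R$ is a subring of a field, hence a domain.

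Since every affine open of $Z$ has domain coordinate ring and $Z$ is irreducible, $Z$ is integral. The argument is essentially routine commutative algebra. The only step requiring care is the translation of $S_1$ into ``no embedded associated primes'': for a Noetherian local ring, $\operatorname{depth}\ge\min(1,\dim)$ at every point is exactly this condition. One must also check that the associated primes of $R$ are the localized versions of those of the stalks, which is the standard compatibility of $\operatorname{Ass}$ with localization.
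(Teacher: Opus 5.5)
Your proof is correct and takes essentially the same route as the paper. The paper gets irreducibility from the unique minimal point and reducedness by citing the standard $(R_0)+(S_1)$ criterion; your step that $S_1$ forces $\operatorname{Ass}(R)=\{\mathfrak p\}$, so $R\hookrightarrow R_{\mathfrak p}=\OO_{Z,\eta}$, is just that criterion unpacked in the one-minimal-prime case, and it yields the domain property directly.
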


\begin{proof}
The generic local ring being a field says that $Z$ is generically reduced, i.e.
$R_0$ holds at its unique minimal point.  The standard $(R_0)+(S_1)$ criterion
therefore gives reducedness, while uniqueness of the minimal point gives
irreducibility.
\end{proof}

\begin{proposition}\label{prop:model}
The surface $X_n$ is an integral normal complete intersection of $k-3$
hypersurfaces of degree $n$ in $\PP^{k-1}$.  The map $\pi_n$ is finite of
degree $n^{k-1}$ and
\[
        \pi_n^*\OO_{\PP^2}(1)=\OO_{X_n}(n).
\]
If $A_n$ denotes the pullback of $\OO_{X_n}(1)$ to a resolution of $X_n$, then
\begin{equation}\label{eq:A-square}
        A_n^2=n^{k-3}.
\end{equation}
\end{proposition}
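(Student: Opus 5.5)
We prove Proposition~\ref{prop:model} in four steps: the complete-intersection description, normality via Serre's criterion, the degree of $\pi_n$, and the self-intersection of $A_n$.

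\emph{Equations.} The plane $\Lambda\subset\PP^{k-1}$ is cut out by $k-3$ independent linear forms $m_j(z)=\sum_i c_{ji}z_i$, one for each linear relation among the $\ell_i$. Hence $X_n=\Phi_n^{-1}(\Lambda)$ is defined by the $k-3$ degree-$n$ forms $F_j=m_j(z_1^n,\dots,z_k^n)$. Since $\Phi_n$ is finite and surjective of degree $n^{k-1}$ on $\PP^{k-1}$, every component of $X_n$ has dimension $2$. So the $F_j$ form a regular sequence and $X_n$ is a complete intersection, in particular Cohen--Macaulay.

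\emph{Normality and integrality.} By Serre's criterion it suffices to show that $X_n$ is regular in codimension one, since $S_2$ holds for a complete intersection. Use the Jacobian criterion. At a point $z$ where the Jacobian matrix $(n c_{ji}z_i^{n-1})$ drops rank, the columns indexed by $\{i: z_i\neq0\}$ must fail to span $\CC^{k-3}$. Via duality between the relations $c$ and the forms $\ell_i$, this happens exactly when the image point $p=\pi_n(z)$ lies on at least three lines $L_i$ with $z_i=0$, i.e.\ $p$ is a point of multiplicity $\ge3$. One checks this duality with linear algebra: the columns $c_{\cdot i}$, $i\in I$, span iff the $\ell_i$, $i\notin I$, are linearly independent. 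Smoothness over double points and generic points is also transparent locally, since there $X_n$ looks like $\{u=x^n,\ v=y^n\}$. Hence $\Sing X_n$ is contained in the finite set $\pi_n^{-1}(\text{points of multiplicity}\ge3)$, $X_n$ is normal, and its connected components are its irreducible components.

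For irreducibility, consider the restriction to the complement of the arrangement. There $X_n$ is the étale Galois cover of $\PP^2\setminus\bigcup L_i$ with group $(\ZZ/n)^{k-1}$ given by adjoining all $n$-th roots of $\ell_i/\ell_k$. This cover is connected because the classes of the $\ell_i/\ell_k$ are independent in $\CC(\PP^2)^*/(\CC(\PP^2)^*)^n$: each $\ell_i$ with $i<k$ has valuation $1$ along $L_i$ and $0$ along the other non-$L_k$ lines. The preimage of the arrangement has dimension $\le1$, so it contains no component of the pure $2$-dimensional $X_n$. Hence $X_n$ has a unique minimal point, with local ring a field by generic smoothness. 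Lemma~\ref{lem:s1-generic-field} then gives integrality; normality already follows from $R_1+S_2$.

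\emph{Degree and pullback.} $\Phi_n^*\OO(1)=\OO(n)$ gives $\pi_n^*\OO_{\PP^2}(1)=\OO_{X_n}(n)$, since $\iota^*\OO_{\PP^{k-1}}(1)=\OO_{\PP^2}(1)$. The degree of $X_n$ is $n^{k-3}$ by Bézout for complete intersections. Projection formula then gives $\deg\pi_n\cdot\deg\OO_{\PP^2}(1)^2=(\pi_n^*\OO(1))^2=n^2\cdot n^{k-3}$, so $\deg\pi_n=n^{k-1}$. This agrees with the Galois group order. Finally, on a resolution $\rho:Y\to X_n$ (birational), $A_n^2=(\rho^*\OO(1))^2=\OO_{X_n}(1)^2=n^{k-3}$.

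The main obstacle is the codimension-one regularity, namely the rank computation relating Jacobian rank to the incidence of vanishing coordinates. I would handle it cleanly via the local model: near a point over $p$ lying on lines $L_{i_1},\dots,L_{i_s}$, the other coordinates are units with local $n$-th roots. So $X_n$ is locally the $(\ZZ/n)^s$-Kummer cover of $(\CC^2,p)$ branched along $s$ lines, which is smooth for $s\le2$ and has an isolated singularity otherwise.
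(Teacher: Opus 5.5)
Your proof is correct, but it establishes the key point, regularity in codimension one, by a different route from the paper.

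\textbf{Regularity in codimension one.} The paper computes nothing at this step. It invokes the later local analysis of Proposition~\ref{prop:local} (formal implicit-function theorem, completed cone model) to see that singularities are isolated, and then has to remark that this forward reference is non-circular.

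You instead apply the Jacobian criterion directly to the complete intersection. Since the $i$-th column is $n z_i^{n-1}c_{\cdot i}$, full rank means the columns with $z_i\neq0$ span $\CC^{k-3}$. By your duality this holds exactly when no nonzero relation in $W$ is supported on $\{i:\ell_i(p)=0\}$, i.e.\ when $r(p)\le2$. This is the same linear algebra as $\ker\rho_U=W_{T_p}$ in Proposition~\ref{prop:local}, packaged as a global smoothness criterion. It is self-contained, avoids the forward reference, and even identifies $\Sing X_n$ exactly as the preimage of the points of multiplicity $\ge3$. The paper's route costs it nothing extra only because it needs the cone model anyway for the one-step blow-up resolution.

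\textbf{Integrality.} Your argument is essentially the paper's: independence of the $\ell_i/\ell_k$ in $K^\times/K^{\times n}$, together with Lemma~\ref{lem:s1-generic-field}. You do make explicit a step the paper leaves implicit. Because the preimage of the arrangement has dimension at most one, no component of $X_n$ lies over it, so all minimal points lie in the generic fiber.

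\textbf{Degree and self-intersection.} You reverse the order of the numerical step. You get $\OO_{X_n}(1)^2=n^{k-3}$ from B\'ezout and deduce $\deg\pi_n=n^{k-1}$ by the projection formula. The paper reads $\deg\pi_n=n^{k-1}$ off the generic Kummer field and deduces $A_n^2$ from $(nA_n)^2=n^{k-1}$. Both are fine.
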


\begin{proof}
The ideal of $\Lambda$ is generated by $k-3$ independent linear forms, whose
pullbacks under $\Phi_n$ are $k-3$ forms of degree $n$.  The inverse image $X_n$
is nonempty, and since $\Phi_n$ is finite it has pure dimension $2$.  Thus these
$k-3$ equations have codimension $k-3$ in the regular scheme $\PP^{k-1}$; hence
they form a regular sequence.

Let $K=\CC(\PP^2)$.  At the generic point, the extension is obtained by adjoining
$n$-th roots of
\[
        \ell_2/\ell_1,\ldots,\ell_k/\ell_1.
\]
Their classes are independent in $K^\times/K^{\times n}$: a relation
\[
 \prod_{i=2}^k(\ell_i/\ell_1)^{a_i}=f^n,
 \qquad 0\le a_i<n,
\]
has valuation $a_j$ along $L_j$ for $j\ge2$, forcing every $a_j=0$.  Thus the
generic algebra is a field of degree $n^{k-1}$.  Since $X_n$ is a complete intersection, it is Cohen--Macaulay and hence satisfies
$S_1$.  The generic algebra is a field, so Lemma~\ref{lem:s1-generic-field}
shows that $X_n$ is integral.

The local analysis in Proposition~\ref{prop:local} below shows that singularities
are isolated, so $X_n$ is regular in codimension one.  The local calculation
used there depends only on the defining complete-intersection equations and the
formal implicit-function theorem; it does not use normality or reducedness of
$X_n$.  Thus this reference is logically non-circular.  Since a complete
intersection is $S_2$, Serre's criterion gives normality.

Finally, $\pi_n^*\OO(1)=\OO_{X_n}(n)$ and $\deg\pi_n=n^{k-1}$, so
\[
        (nA_n)^2=n^{k-1},
\]
which is \eqref{eq:A-square}.
\end{proof}

\subsection{Local singularities and the blow-up resolution}

For a point $p\in\PP^2$ let $r(p)$ be the number of arrangement lines through
$p$.

\begin{lemma}[Blowing up a completed standard cone]\label{lem:completed-cone-blowup}
Let $(R,\mathfrak m)$ be a Noetherian local $\CC$-algebra.  Suppose that its
$\mathfrak m$-adic completion is isomorphic to the completion at the vertex of a
standard graded affine cone
\[
        A=\CC[x_1,\ldots,x_r]/I,
        \qquad C=\operatorname{Proj}A,
\]
where $A$ is a normal domain and $C$ is smooth, and suppose that the isomorphism carries
$\mathfrak m\widehat R$ to the vertex ideal $(x_1,\ldots,x_r)\widehat A$.
Then $\Bl_{\mathfrak m}\operatorname{Spec}R$ is regular along its exceptional
fiber.  Scheme-theoretically that fiber is a reduced copy of $C$, and, if it is
denoted by $E$, then
\[
        \OO(-E)|_E\simeq\OO_C(1).
\]
In particular $N_{E/\Bl_{\mathfrak m}\operatorname{Spec}R}\simeq\OO_C(-1)$.
\end{lemma}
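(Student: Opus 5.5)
The plan is to reduce everything to the graded cone $A$ itself, where the blow-up of the vertex is explicit, and then transport the conclusion back to $R$ by faithfully flat completion.

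\emph{Step 1 (the cone).} Since $A$ is standard graded with vertex ideal $\mathfrak n=(x_1,\ldots,x_r)$ and $\mathfrak n^j=A_{\ge j}$, the Rees algebra is
\[
\bigoplus_j \mathfrak n^j=\bigoplus_j A_{\ge j},
\]
and $\Bl_{\mathfrak n}\operatorname{Spec}A$ is the total space $\mathbf V=\operatorname{Spec}_C\bigl(\bigoplus_{m\ge0}\OO_C(m)\bigr)$ of the line bundle $\OO_C(-1)$ over $C$. I would verify this chart by chart: on the chart $x_i\ne0$ the blow-up is $\operatorname{Spec}A[\mathfrak n/x_i]$. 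One identifies $A[\mathfrak n/x_i]$ with $A_{(x_i)}[x_i]$, where $A_{(x_i)}$ is the degree-zero localization. This uses that $A$ is a domain, so that $x_i$ is a nonzerodivisor and the fractions make sense; normality is not really needed beyond this. The identification shows the blow-up is a line bundle over the smooth $C$, hence regular. The exceptional divisor is cut out by $x_i=0$ in this chart, i.e.\ it is the zero section, reduced and isomorphic to $C$. Moreover $\OO(-E)|_E=\mathfrak n\OO/\mathfrak n^2\OO$ is the degree-one piece, which is $\OO_C(1)$. Hence $N_{E}=\OO(E)|_E\simeq\OO_C(-1)$.

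\emph{Step 2 (passage to the completion).} Blowing up commutes with flat base change. Applying this to $\operatorname{Spec}A_{\mathfrak n}\to\operatorname{Spec}A$ and then to $\operatorname{Spec}\widehat A\to\operatorname{Spec}A_{\mathfrak n}$ gives
\[
\Bl_{\mathfrak n\widehat A}\operatorname{Spec}\widehat A=\mathbf V\times_{\operatorname{Spec}A}\operatorname{Spec}\widehat A.
\]
The exceptional fiber is unchanged, because it lives over $A/\mathfrak n=\CC=\widehat A/\mathfrak n\widehat A$. The local rings of $\Bl\widehat A$ along this fiber are flat over the corresponding local rings of $\mathbf V$ with regular closed fiber, since the fiber is a field, or rather formally the same completion. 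Concretely, the completion of a local ring of $\Bl\widehat A$ at a point of $E$ agrees with the completion of the corresponding local ring of $\mathbf V$, because the $\mathfrak n$-adic completion does not change the blow-up in a neighborhood of $E$. Hence these local rings are regular.

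\emph{Step 3 (descent to $R$).} Likewise $R\to\widehat R\simeq\widehat A$ is faithfully flat with $\mathfrak m\widehat R=\mathfrak n\widehat A$. So
\[
\Bl_{\mathfrak m}\operatorname{Spec}R\times_R\widehat R=\Bl_{\mathfrak n\widehat A}\operatorname{Spec}\widehat A,
\]
and the exceptional fibers are identified as schemes over $R/\mathfrak m=\CC$. Regularity descends along flat local homomorphisms, since $B\to B'$ flat local with $B'$ regular implies $B$ regular. This gives regularity along the exceptional fiber of $\Bl_{\mathfrak m}\operatorname{Spec}R$. The isomorphism $E\simeq C$ and $\OO(-E)|_E=\mathfrak m\OO/\mathfrak m^2\OO\simeq\OO_C(1)$ also transfer, because the associated graded ring $\bigoplus_j\mathfrak m^j/\mathfrak m^{j+1}$ is unchanged by completion. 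Indeed, $E=\operatorname{Proj}\operatorname{gr}_{\mathfrak m}R=\operatorname{Proj}\operatorname{gr}_{\mathfrak n}A=\operatorname{Proj}A=C$, with the tautological $\OO(1)$. This last remark gives the fiber statements directly and cleanly.

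\emph{Main obstacle.} The main obstacle is Step 2–3: making precise that regularity of $\Bl$ at points of $E$ can be checked after base change to the completion. I would handle it by the faithfully-flat descent of regularity just cited, and not by comparing completions of local rings.
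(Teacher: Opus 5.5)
Your proof is correct and takes essentially the same route as the paper: faithfully flat completion, compatibility of blow-up with flat base change, the cone's blow-up as the total space of $\OO_C(-1)$, descent of regularity along flat local maps, and invariance of $\operatorname{gr}_{\mathfrak m}R$ under completion to get $E\simeq C$ with $\OO(-E)|_E\simeq\OO_C(1)$. Your Step 2 makes explicit a step the paper leaves implicit, namely regularity of $\Bl\widehat A$ at points of $E$, obtained from flatness over $\Bl A$ with closed fibre a field. Note that this step is an \emph{ascent} of regularity, not the descent you mention under ``main obstacle''.
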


\begin{proof}
The completion map $R\to\widehat R$ is faithfully flat
\cite[Lemma~10.97.3, Tag~00MC]{StacksCompletionFF}.  Flat base change identifies
the Rees algebra of $\mathfrak m$ after completion with the Rees algebra of
$\mathfrak m\widehat R$; equivalently, blow-up commutes with this base change
\cite[Lemma~31.33.3, Tag~0805]{StacksBlowup}.  The blow-up of the vertex of a
standard affine cone over a smooth projective scheme is the total space of
$\OO_C(-1)$, hence is smooth.  Regularity therefore descends from the completed
blow-up along the faithfully flat local maps
\cite[Lemma~10.164.4, Tag~07NG]{StacksRegularDescent}.

The exceptional fiber of the original blow-up is
\[
        \operatorname{Proj}\operatorname{gr}_{\mathfrak m}R.
\]
For every $d$, completion induces
$\mathfrak m^d/\mathfrak m^{d+1}\simeq
(\mathfrak m\widehat R)^d/(\mathfrak m\widehat R)^{d+1}$, so the associated
graded ring is unchanged by completion.  Under the assumed cone description,
the latter associated graded ring is the standard graded ring $A$ itself.
Thus the exceptional fiber is $\operatorname{Proj}A=C$ scheme-theoretically;
it is reduced because $C$ is smooth.  Finally, the tautological invertible sheaf
of the blow-up is $\mathfrak m\OO_{\Bl}=\OO(-E)$, and its restriction to the
exceptional fiber is $\OO_C(1)$.  This proves the assertions.
\end{proof}

\begin{proposition}\label{prop:local}
Let $p$ be an $r$-fold point of $\cA$.
\begin{enumerate}[label=\textup{(\roman*)}]
\item The fiber of $\pi_n$ over $p$ has $n^{k-r-1}$ points.
\item If $r\le2$, $X_n$ is smooth over $p$.
\item If $r\ge3$, write $T_p=\{i:L_i\ni p\}$.  At every
      $q\in\pi_n^{-1}(p)$ there is an isomorphism of complete local
      $\CC$-algebras
      \begin{equation}\label{eq:completed-local-cone}
      \widehat\OO_{X_n,q}\simeq
      \CC[[x_i:i\in T_p]]/(F_1,\ldots,F_{r-2}),
      \end{equation}
      where each $F_a$ is a homogeneous form
      $F_a=\sum_{i\in T_p}\lambda_i^{(a)}x_i^n$, and the ambient coordinate
      $z_i$ maps to $x_i$ for every $i\in T_p$.  The right-hand side is the
      completed local ring at the vertex of the affine cone over the smooth
      complete-intersection curve
      \[
        C_{n,r}=\Phi_n^{-1}(\PP^1)\subset\PP^{r-1},
      \]
      cut out by $r-2$ hypersurfaces of degree $n$.  This curve has
      \[
        d_{n,r}:=\deg C_{n,r}=n^{r-2}
      \]
      and
      \begin{equation}\label{eq:genus}
        g_{n,r}=1+\frac12n^{r-2}\bigl((r-2)n-r\bigr).
      \end{equation}
\item Blowing up $q$ resolves the singularity in one step.  The exceptional
      curve is a reduced copy of $C_{n,r}$ with self-intersection
      \[
        C_{n,r}^2=-n^{r-2}.
      \]
\end{enumerate}
\end{proposition}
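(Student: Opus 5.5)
I will work locally over a point $p\in\PP^2$ with $r=r(p)$, choosing affine coordinates $(u,v)$ on $\PP^2$ centered at $p$ and normalizing so that $\ell_{i_0}$ does not vanish at $p$ for some fixed $i_0\notin T_p$. Then $X_n$ near $\pi_n^{-1}(p)$ is described in the affine chart $z_{i_0}=1$ by the equations $z_i^n=\ell_i(u,v)/\ell_{i_0}(u,v)$, $i\ne i_0$, where $(u,v)$ are recovered linearly from the $z_i^n$ (this is the same as saying $X_n=\Phi_n^{-1}(\Lambda)$).

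For (i), over $p$ the values $\ell_i(p)$ vanish exactly for $i\in T_p$. Fixing the projective point, $z_i=0$ for $i\in T_p$, while for the $k-r$ indices $i\notin T_p$ one chooses an $n$-th root of $\ell_i(p)$. Up to the projective scaling this gives $n^{k-r-1}$ points.

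For (ii) and (iii), at $q$ the coordinates $z_j$ with $j\notin T_p$, $j\ne i_0$, are units. I will use the equations for two indices $j_1,j_2\notin T_p$ \emph{(or rather use $u,v$ directly)}: the equations $z_j^n=\ell_j/\ell_{i_0}$ for $j\notin T_p$ have unit derivative in $z_j$, so by the formal implicit function theorem they eliminate each such $z_j$ as a power series in $(u,v)$.

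What remains are the equations $z_i^n=\ell_i(u,v)\cdot(\text{unit})$ for $i\in T_p$, with the unit equal to $1/\ell_{i_0}$. Since the $\ell_i$, $i\in T_p$, span the two-dimensional space of linear forms vanishing at $p$, I pick two of them, $\ell_{a},\ell_{b}$, as new local coordinates. The remaining $r-2$ forms are constant linear combinations $\ell_i=\mu_i\ell_a+\nu_i\ell_b$. The unit $1/\ell_{i_0}$ is common to all of these equations, so I absorb it by the substitution $x_i=z_i\cdot\ell_{i_0}^{1/n}$. A formal $n$-th root of the unit exists, and this substitution keeps $z_i\mapsto x_i$ up to a unit; to keep $z_i\mapsto x_i$ exactly, one can instead note that the unit is the same for all $i\in T_p$.

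This leaves $\ell_a,\ell_b$ expressed as $x_a^n$ and $x_b^n$ times that unit. Substituting gives the $r-2$ homogeneous relations $F=x_i^n-\mu_ix_a^n-\nu_ix_b^n$, proving \eqref{eq:completed-local-cone}. When $r\le2$ no relations remain and $\widehat\OO_{X_n,q}\simeq\CC[[x_a,x_b]]$, which gives (ii); for $r=1$ one coordinate is $u$ itself.

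The cone is over $C_{n,r}=\Phi_n^{-1}(\PP^1)$, where $\PP^1\subset\PP^{r-1}$ is the image of $[\ell_i]_{i\in T_p}$. This is the Fermat-type cover attached to the $r$ distinct points of $\PP^1$. Its smoothness follows from the Jacobian criterion: the $r\times(r-2)$ minors of the coefficient matrix are nonzero because the points are distinct, and at most one coordinate of a point of $C_{n,r}$ vanishes.

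The remaining invariants are routine:
\begin{itemize}
\item $\deg C_{n,r}=n^{r-2}$ by B\'ezout.
\item Adjunction gives $K_C=\OO_C((r-2)n-r)$, hence $2g-2=n^{r-2}((r-2)n-r)$, which is \eqref{eq:genus}.
\item The cone ring is a complete intersection, hence Cohen--Macaulay, with isolated singularity, hence normal. It is a domain since $C$ is connected and smooth, and it is standard graded.
\end{itemize}

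Part (iv) then follows from Lemma~\ref{lem:completed-cone-blowup}. The exceptional curve is a reduced copy of $C_{n,r}$ with $\OO(-E)|_E=\OO_C(1)$, so $E^2=-\deg C_{n,r}=-n^{r-2}$.

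The main obstacle I expect is the bookkeeping in (iii): making the isomorphism send $z_i\mapsto x_i$ exactly while absorbing the unit. I will handle it by dehomogenizing with $z_{i_0}=1$ and checking that the substitution preserves the maximal ideal, as Lemma~\ref{lem:completed-cone-blowup} requires.
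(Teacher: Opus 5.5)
Your proof is correct, but the key local computation in (iii) takes a different route from the paper's.

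\textbf{How the paper does (iii).} It stays inside the complete-intersection presentation in $\PP^{k-1}$. It splits the relation space $W$ into $W_{T_p}$ plus a complement. It identifies $W/W_{T_p}$ with the hyperplane $H_p\subset\CC^U$, which makes the Jacobian in the $k-r-1$ unit variables invertible. It then notes that the $r-2$ relations supported on $T_p$ are untouched by the elimination, so they are literally the cone equations, with $z_i\mapsto x_i$.

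\textbf{How you do it.} You use the fibre-product (graph) model $z_i^n=\ell_i/\ell_{i_0}$ over base coordinates $(u,v)$. Each unit variable occurs in exactly one equation, so the elimination Jacobian is diagonal and none of the $W$, $W_{T_p}$, $H_p$ linear algebra is needed. Two of the $T_p$-equations then eliminate $(u,v)$, and the cone relations appear explicitly as $x_i^n-\mu_ix_a^n-\nu_ix_b^n$.

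\textbf{The unit needs tidying.} The cleanest fix is to take $u=\ell_a/\ell_{i_0}$ and $v=\ell_b/\ell_{i_0}$ as affine coordinates on $\PP^2\setminus L_{i_0}$ centred at $p$. This is legitimate because $\ell_a,\ell_b,\ell_{i_0}$ are independent. With this choice no unit ever appears: $z_a^n=u$ and $z_b^n=v$, so $z_i\mapsto x_i$ holds exactly. Your rescaling $x_i=z_i\ell_{i_0}^{1/n}$ only gives $z_i\mapsto x_i\cdot(\text{unit})$. Your remark about the common unit does rescue it, though. The $F_a$ are homogeneous of degree $n$, so the automorphism $x_i\mapsto e\,x_i$ of $\CC[[x_i:i\in T_p]]$ preserves $(F_1,\dots,F_{r-2})$, and composing restores exactness. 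Even the up-to-unit form would suffice for Lemma~\ref{lem:singular-ideal-general}.

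\textbf{Smoothness and integrality of $C_{n,r}$.} You also argue differently here. Your Jacobian criterion gives smoothness of $C_{n,r}$ and of the punctured cone at once: every maximal $(r-2)\times(r-2)$ minor of the relation matrix is nonzero (you wrote ``$r\times(r-2)$''), and at most one coordinate vanishes at any point. For the domain property you should cite connectedness of positive-dimensional complete intersections, or use ``normal and connected graded implies domain''. The paper instead gets integrality from Kummer-theoretic independence plus Lemma~\ref{lem:s1-generic-field}, and smoothness from the local branch description.

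\textbf{Trade-off.} Your route is more elementary and makes the cone equations explicit. The paper's keeps the $F_a$ visibly as ambient defining equations, which it reuses in the later ideal-theoretic arguments.
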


\begin{proof}
At $p$ exactly $k-r$ of the $k$ homogeneous coordinates are non-zero.  The
fiber of the power map is therefore a torsor under
$\mu_n^{k-r}/\mu_n$, giving $n^{k-r-1}$ points.

We first separate the cases with fewer than two vanishing arrangement coordinates.
If $r=0$, the power map is etale at every point of the fiber, so the completed
local ring is regular.  If $r=1$, choose local coordinates $(u,v)$ on the base
with the unique arrangement line given by $u=0$.  After extracting $n$-th roots
of the remaining units, each completed local branch has the form $t^n=u$, hence
is $\CC[[t,v]]$.  Thus $X_n$ is smooth over all points with $r\le1$.

Assume now $r\ge2$ and put $U=\{1,\ldots,k\}\setminus T_p$.
Let
\[
 W=\{(\lambda_1,\ldots,\lambda_k)\in\CC^k:
       \textstyle\sum_i\lambda_i\ell_i=0\}
\]
be the $(k-3)$-dimensional relation space.  The subspace $W_{T_p}$ of relations
supported on $T_p$ has dimension $r-2$, because the $r$ forms vanishing at $p$
span the two-dimensional space of linear forms vanishing at $p$.  Restriction
to the unit coordinates gives a linear map
\[
        \rho_U:W\longrightarrow\CC^U
\]
with kernel exactly $W_{T_p}$.  Evaluating a relation at $p$ shows that its image is
contained in the hyperplane
\[
 H_p=\Bigl\{(a_j)_{j\in U}:\sum_{j\in U}a_j\ell_j(p)=0\Bigr\}.
\]
Both $W/W_{T_p}$ and $H_p$ have dimension $k-r-1$, hence the induced map
$W/W_{T_p}\to H_p$ is an isomorphism.

Choose $j_0\in U$.  Since $\ell_{j_0}(p)\ne0$, projection
$H_p\to\CC^{U\setminus\{j_0\}}$ is an isomorphism.  Work in the projective
chart $z_{j_0}\ne0$ and normalize $z_{j_0}=1$.  Choose a basis
$\lambda^{(1)},\ldots,\lambda^{(k-3)}$ of $W$ whose first $r-2$ vectors span
$W_{T_p}$.  For the remaining $k-r-1$ equations
\[
        G_a(z)=\sum_i\lambda_i^{(a)}z_i^n \qquad (a=r-1,\ldots,k-3),
\]
the Jacobian with respect to the $k-r-1$ unit variables
$z_j$, $j\in U\setminus\{j_0\}$, at $q$ is
\[
 \bigl(\lambda_j^{(a)}\bigr)_{a,j}
 \operatorname{diag}\bigl(nq_j^{\,n-1}\bigr)_{j\in U\setminus\{j_0\}}.
\]
The first factor is invertible by the preceding identification with $H_p$, and
the diagonal factor is invertible because every $q_j$, $j\in U$, is nonzero.
The formal implicit-function theorem therefore eliminates precisely these
$k-r-1$ unit variables as formal power series in the vanishing coordinates
$z_i$, $i\in T_p$.  After subtracting their values at $q$, the eliminated unit
coordinates have zero constant term, hence lie in the ideal generated by the
vanishing coordinates.  Consequently the completed maximal ideal is carried
\emph{exactly} to the vertex ideal $(x_i:i\in T_p)$, not merely to an ideal with
the same radical.

Crucially, the first $r-2$ equations are supported on $T_p$ and are not altered by
this elimination.  Thus the vanishing coordinates themselves survive as the
cone coordinates, and we obtain the coordinate-preserving isomorphism
\eqref{eq:completed-local-cone}, with
$F_a=\sum_{i\in T}\lambda_i^{(a)}x_i^n$ for $a=1,\ldots,r-2$.
For $r=2$ there are no remaining equations, so the completed local ring is
regular.

For $r\ge3$, we also record explicitly that the projective curve $C_{n,r}$ is
integral.  Write $u_1,\ldots,u_r$ for the restrictions to the corresponding
$\PP^1$ of the $r$ arrangement forms through $p$.  Their zeroes are pairwise
distinct.  In $\CC(\PP^1)^\times/\CC(\PP^1)^{\times n}$ the classes
\[
        u_2/u_1,\ldots,u_r/u_1
\]
are independent: valuation at the zero of $u_j$ detects the exponent of
$u_j/u_1$.  Hence the generic Kummer algebra is a field, and the associated
function-field extension has degree $n^{r-1}$.  On the other hand, the restriction
$C_{n,r}\to\PP^1$ of the coordinatewise power map has degree
\[
        \deg\OO_{C_{n,r}}(n)=n\deg C_{n,r}=n^{r-1}.
\]
Because the map to $\PP^1$ is finite, every irreducible component would dominate
$\PP^1$; the generic fiber is the spectrum of the field above, so there is a
unique minimal point and its generic local ring is a field.  Since $C_{n,r}$ is a
complete intersection, hence Cohen--Macaulay and $S_1$,
Lemma~\ref{lem:s1-generic-field} shows that $C_{n,r}$ is integral.  It is smooth because the line $\PP^1$ meets each
coordinate hyperplane in a distinct point, the cover is etale
away from those points, and locally at each branch point it is obtained by
adjoining an $n$-th root of a parameter.  Since $C_{n,r}$ is a complete
intersection of type $(n,\ldots,n)$ with $r-2$ factors in $\PP^{r-1}$, its degree
is $n^{r-2}$ and adjunction gives
\[
 \omega_{C_{n,r}}\simeq
 \OO_{C_{n,r}}((r-2)n-r),
\]
which yields \eqref{eq:genus}.

The cone ring in \eqref{eq:completed-local-cone} is a standard graded normal
domain: it is a complete intersection, hence $S_2$, and it is regular away from
the vertex because $C_{n,r}$ is smooth; the vertex has codimension two.  Moreover
the coordinate-preserving statement above shows
that the maximal ideal of $\OO_{X_n,q}$ becomes the vertex ideal after
completion.  Lemma~\ref{lem:completed-cone-blowup} therefore applies.  It shows
that the ordinary blow-up of $q$ is smooth and that its exceptional fiber is a
reduced copy of $C_{n,r}$, with
\[
        \OO(-E)|_{C_{n,r}}\simeq\OO_{C_{n,r}}(1).
\]
Hence its self-intersection is
$-\deg\OO_{C_{n,r}}(1)=-n^{r-2}$.
\end{proof}

Let
\[
        \Sigma_n=\Sing(X_n)_{\mathrm{red}},
        \qquad
        \rho_n:Y_n:=\Bl_{\Sigma_n}X_n\longrightarrow X_n.
\]
By Proposition~\ref{prop:local}, $Y_n$ is smooth.  Write $E_{n,r}$ for the sum of
the exceptional curves above the $r$-fold points and
\[
        E_n=\sum_{r\ge3}E_{n,r}.
\]
The exceptional curves have self-intersection at most $-2$, so the resolution is
minimal.

\begin{corollary}\label{cor:intersection}
One has
\[
 A_n\cdot E_n=0,
 \qquad
 E_{n,r}^2=-t_r n^{k-3},
 \qquad
 E_{n,r}\cdot E_{n,r'}=0\quad(r\ne r'),
 \qquad
 E_n^2=-Tn^{k-3}.
\]
Moreover
\begin{equation}\label{eq:ideal-exceptional}
        \cI_{\Sigma_n/X_n}\OO_{Y_n}=\OO_{Y_n}(-E_n),
\end{equation}
with $E_n$ reduced.
\end{corollary}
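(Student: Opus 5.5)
All assertions of Corollary~\ref{cor:intersection} are local over the finite set $\Sigma_n$, so the plan is to read them off from Proposition~\ref{prop:local}, working one singular point at a time.

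\emph{Locating the singular points.} By Proposition~\ref{prop:local}(ii)--(iii), $\Sigma_n$ is exactly the union of the fibres $\pi_n^{-1}(p)$ over the points $p$ of multiplicity $r\ge3$. By part (i), each such fibre consists of $n^{k-r-1}$ points. Over the $r$-fold points, $E_{n,r}$ is therefore a disjoint union of $t_r n^{k-r-1}$ exceptional curves, each a copy of $C_{n,r}$.

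\emph{The ideal identity \eqref{eq:ideal-exceptional}.} Since $\Sigma_n$ is finite, $\Bl_{\Sigma_n}X_n$ is, near each $q\in\Sigma_n$, the blow-up of the maximal ideal $\mathfrak m_q$. The inverse image ideal of the blown-up centre is invertible and equals $\OO(-\text{exceptional fibre})$. Lemma~\ref{lem:completed-cone-blowup} identifies this fibre scheme-theoretically with a reduced copy of $C_{n,r}$. Gluing over the distinct points gives $\cI_{\Sigma_n/X_n}\OO_{Y_n}=\OO_{Y_n}(-E_n)$ with $E_n$ reduced. The one point needing care is that $\Sigma_n$ carries its reduced structure, so its ideal is locally $\mathfrak m_q$, exactly as in Lemma~\ref{lem:completed-cone-blowup}.

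\emph{The intersection numbers.}
\begin{itemize}
\item $A_n\cdot E_n=0$: $A_n$ is pulled back from $X_n$, and each exceptional curve is contracted to a point. Concretely, $\OO_{X_n}(1)$ is trivial near $q$, so its pullback restricts trivially to the curve.
\item $E_{n,r}\cdot E_{n,r'}=0$ for $r\ne r'$: different exceptional curves lie over different points and are disjoint.
\item $E_{n,r}^2$: the summands of $E_{n,r}$ are pairwise disjoint, so $E_{n,r}^2$ is the sum of their self-intersections. By Proposition~\ref{prop:local}(iv) each equals $-n^{r-2}$. Hence
\[
E_{n,r}^2 = t_r\,n^{k-r-1}\cdot(-n^{r-2}) = -t_r n^{k-3}.
\]
\item $E_n^2$: the cross terms vanish, so summing over $r\ge3$ gives $-\bigl(\sum_{r\ge3} t_r\bigr)n^{k-3} = -Tn^{k-3}$.
\end{itemize}

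The only genuinely nonroutine ingredient is the scheme-theoretic reducedness of the exceptional fibre. That is already supplied by Lemma~\ref{lem:completed-cone-blowup} together with the coordinate-preserving isomorphism in Proposition~\ref{prop:local}(iii), so no real obstacle remains.
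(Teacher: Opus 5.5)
Your proposal is correct and matches the paper's proof: you count the $t_r n^{k-r-1}$ pairwise disjoint exceptional curves of square $-n^{r-2}$ from Proposition~\ref{prop:local}, use that $A_n$ is pulled back from $X_n$, and obtain \eqref{eq:ideal-exceptional} with $E_n$ reduced from Lemma~\ref{lem:completed-cone-blowup} applied through the coordinate-preserving cone model. The only difference is that you spell out a little more explicitly that the reduced structure on $\Sigma_n$ makes the local centre exactly $\mathfrak m_q$.
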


\begin{proof}
There are $t_rn^{k-r-1}$ exceptional curves of type $(n,r)$, each of square
$-n^{r-2}$; hence
\[
 E_{n,r}^2=t_rn^{k-r-1}(-n^{r-2})=-t_rn^{k-3}.
\]  The curves are
pairwise disjoint, and $A_n$ is pulled back from $X_n$, hence has zero
intersection with them.  Lemma~\ref{lem:completed-cone-blowup}, applied through the coordinate-preserving
local model of Proposition~\ref{prop:local}, shows that the maximal ideal pulls
back with order exactly one along the reduced exceptional curve; this is
precisely \eqref{eq:ideal-exceptional}.
\end{proof}

\subsection{Canonical class, Euler number, and signature}

Set
\[
        m_n=(k-3)n-k,
        \qquad
        c_{n,r}=(r-2)(n-1)-1.
\]

\begin{proposition}\label{prop:chern}
The surface $Y_n$ satisfies
\begin{equation}\label{eq:K-general}
        K_{Y_n}=m_nA_n-\sum_{r\ge3}c_{n,r}E_{n,r},
\end{equation}
and therefore
\begin{equation}\label{eq:K2-direct}
 \frac{K_{Y_n}^2}{n^{k-3}}
 =m_n^2-\sum_{r\ge3}c_{n,r}^2t_r.
\end{equation}
Furthermore
\begin{equation}\label{eq:c2-strata}
\begin{aligned}
 c_2(Y_n)=e(Y_n)
 &=n^{k-1}e(U)+n^{k-2}(2k-f_1)+t_2n^{k-3}\\
 &\quad+\sum_{r\ge3}t_r n^{k-r-1}(2-2g_{n,r}),
\end{aligned}
\end{equation}
where
\begin{equation}\label{eq:eU}
        e(U)=3-2k+f_1-f_0.
\end{equation}
Equivalently,
\begin{align}
 \frac{c_2(Y_n)}{n^{k-3}}
 &=n^2(3-2k+f_1-f_0)
   +n(2k-2f_1+2f_0)+f_1-t_2,\label{eq:c2-closed}\\
 \frac{K_{Y_n}^2}{n^{k-3}}
 &=n^2(9-5k+3f_1-4f_0)
   +2n(2k-2f_1+2f_0)\\
 &\qquad +k+f_1-f_0+t_2.\label{eq:K2-closed}
\end{align}
\end{proposition}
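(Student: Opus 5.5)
We compute $K_{Y_n}$ by adjunction on $X_n$ followed by the discrepancy of the blow-up, and $c_2(Y_n)$ by additivity of Euler numbers over a stratification of $\PP^2$; the closed forms then follow by expanding with the genus formula \eqref{eq:genus}.

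\emph{Step 1 (canonical class).} By Proposition~\ref{prop:model}, $X_n$ is a normal complete intersection of $k-3$ hypersurfaces of degree $n$ in $\PP^{k-1}$. Adjunction therefore gives $\omega_{X_n}\simeq\OO_{X_n}((k-3)n-k)=\OO_{X_n}(m_n)$, which is invertible. Since $\rho_n$ is an isomorphism away from the exceptional curves, we can write
\[
K_{Y_n}=m_nA_n+\sum_C a_C C .
\]
To find $a_C$ for a curve $C$ of type $(n,r)$, we apply adjunction on $Y_n$. By Proposition~\ref{prop:local}(iv), $C\cong C_{n,r}$ and $C^2=-n^{r-2}$, and $A_n\cdot C=0$. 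Hence
\[
2g_{n,r}-2=(K_{Y_n}+C)\cdot C=(a_C+1)(-n^{r-2}).
\]
From \eqref{eq:genus}, $2g_{n,r}-2=n^{r-2}((r-2)n-r)$, so $a_C=-((r-2)n-r)-1=-c_{n,r}$. This is \eqref{eq:K-general}.

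Squaring, the cross terms vanish because $A_n\cdot E_n=0$ and the $E_{n,r}$ are mutually orthogonal (Corollary~\ref{cor:intersection}). Together with $A_n^2=n^{k-3}$ and $E_{n,r}^2=-t_rn^{k-3}$, this gives \eqref{eq:K2-direct}.

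\emph{Step 2 (Euler number).} We stratify $\PP^2$ into three pieces.
\begin{itemize}
\item The complement $U$ of the lines. Here the cover is étale of degree $n^{k-1}$. By inclusion--exclusion, $e(U)=3-2k+f_1-f_0$, since each line contributes $2$, multiple points are overcounted, and $\sum_p r(p)=f_1$.
\item The open line pieces $L_i^\circ$. Each has Euler number $2-(\text{number of multiple points on }L_i)$, and summing over $i$ gives $2k-f_1$. Each such point has $n^{k-2}$ preimages by Proposition~\ref{prop:local}(i).
\item The $r$-fold points. A double point has $n^{k-3}$ preimages, all smooth. An $r$-fold point with $r\ge3$ has $n^{k-r-1}$ preimages, and each is replaced in $Y_n$ by a copy of $C_{n,r}$, contributing $2-2g_{n,r}$.
\end{itemize}
Summing these contributions yields \eqref{eq:c2-strata}.

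\emph{Step 3 (closed forms).} This is the bookkeeping step, which I expect to be the main obstacle; it is not conceptually hard but error-prone. Divide \eqref{eq:c2-strata} by $n^{k-3}$. The term $n^{k-r-1}(2-2g_{n,r})$ becomes $-n^{-r+2}\cdot n^{r-2}((r-2)n-r)=r-(r-2)n$, so the sum over $r\ge3$ equals
\[
\sum_{r\ge3}t_r\bigl(r-(r-2)n\bigr).
\]
Using $\sum_{r\ge2}rt_r=f_1$ and $\sum_{r\ge2}t_r=f_0$, this sum becomes $(f_1-2t_2)-n(f_1-2f_0)$. Adding the remaining terms gives \eqref{eq:c2-closed}.

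For $K^2$, I will expand \eqref{eq:K2-direct} rather than use Noether. Write $c_{n,r}=(r-2)n-(r-1)$ and expand
\[
c_{n,r}^2=(r-2)^2n^2-2(r-2)(r-1)n+(r-1)^2 .
\]
Then express the moments $\sum_r(r-2)^2t_r$, $\sum_r(r-1)(r-2)t_r$ and $\sum_r(r-1)^2t_r$ (summed over $r\ge3$, or equivalently $r\ge2$ after correcting the $r=2$ terms) via $f_0$, $f_1$ and the pair identity \eqref{eq:pair-identity}, $\sum_r r(r-1)t_r=k(k-1)$. This lets $\sum_r r^2t_r$ be replaced by $k^2-k+f_1$.

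Combined with $m_n^2=((k-3)n-k)^2$, the $k^2$ terms cancel in each coefficient: the $n^2$ coefficient reduces to $9-5k+3f_1-4f_0$, the $n$ coefficient to $2(2k-2f_1+2f_0)$, and the constant to $k+f_1-f_0+t_2$. This gives \eqref{eq:K2-closed}. As a consistency check, I would verify that $K^2-2c_2$ is divisible by $3$, as the signature must be.
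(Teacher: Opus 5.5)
Your proposal is correct and follows the paper's proof essentially step for step. The canonical class comes from adjunction on the complete intersection plus per-curve adjunction on $Y_n$; note that your identity $(K_{Y_n}+C)\cdot C=(a_C+1)C^2$ tacitly uses that the exceptional curves are pairwise disjoint. The Euler number uses the same three-stratum count, the closed forms come from expansion via the pair identity, and your explicit moment bookkeeping, including the $r=2$ correction $t_2$ in the constant term, correctly reproduces all three coefficients of \eqref{eq:K2-closed}.
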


\begin{proof}
Adjunction for the complete intersection $X_n\subset\PP^{k-1}$ gives
\[
        \omega_{X_n}\simeq\OO_{X_n}((k-3)n-k)=\OO_{X_n}(m_n).
\]
In particular $X_n$ is Gorenstein and $K_{X_n}$ is Cartier.  Since
$\rho_n:Y_n\to X_n$ is an isomorphism away from the isolated singular points,
$K_{Y_n}-\rho_n^*K_{X_n}$ is supported on the exceptional curves.  Their pairwise
disjointness therefore allows the coefficients to be determined one curve at a
time.  For an exceptional curve $C=C_{n,r}$, adjunction on $Y_n$ gives
\[
 K_{Y_n}\cdot C=2g_{n,r}-2-C^2
 =n^{r-2}\bigl((r-2)n-r+1\bigr).
\]
Since $C^2=-n^{r-2}$ and $A_n\cdot C=0$, its discrepancy is
\[
 -\bigl((r-2)n-r+1\bigr)
 =-\bigl((r-2)(n-1)-1\bigr)=-c_{n,r},
\]
which proves \eqref{eq:K-general}; \eqref{eq:K2-direct} follows from
Corollary~\ref{cor:intersection}.

For the Euler number, stratify $\PP^2$ into the complement $U$, the smooth
parts of the arrangement lines, and the multiple points.  The total Euler
characteristic of the open line strata is
\[
        2k-f_1.
\]
The fiber cardinalities of $\pi_n$ over these strata are respectively
$n^{k-1}$, $n^{k-2}$, and $n^{k-r-1}$ over an $r$-fold point.  A double point is
smooth on $X_n$, while for $r\ge3$ the resolution replaces each singular point
by a curve of Euler characteristic $2-2g_{n,r}$.  This gives
\eqref{eq:c2-strata}.  Formula \eqref{eq:eU} follows from
$e(\PP^2)=3$.

Using \eqref{eq:genus}, the last sum in \eqref{eq:c2-strata} simplifies to
\[
 -n^{k-3}\sum_{r\ge3}t_r\bigl((r-2)n-r\bigr),
\]
which gives \eqref{eq:c2-closed}.  Expanding \eqref{eq:K2-direct} and using the
incidence identity \eqref{eq:pair-identity} gives \eqref{eq:K2-closed}.
These formulas are derived directly in the projective normalization used in this
paper.  The Hirzebruch--Kummer construction itself is classical
\cite{Hirzebruch,Pokora2019}; no identification with the alternative
``blow up first, then cover'' resolution is used in the derivation above.
\end{proof}

\begin{corollary}[Signature formula]\label{cor:signature-formula}
Define
\begin{equation}\label{eq:QR}
 Q_{\cA}=3-k+\sum_{r\ge2}(r-2)t_r,
 \qquad
 R_{\cA}=k+3t_2-f_1-f_0.
\end{equation}
Then
\begin{equation}\label{eq:signature-formula}
 \boxed{
 \sig(Y_n)=\frac{n^{k-3}}3\bigl(Q_{\cA}n^2+R_{\cA}\bigr).}
\end{equation}
In particular the coefficient of $n$ vanishes identically.
\end{corollary}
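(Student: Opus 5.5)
The plan is to compute $\sig(Y_n)=(K_{Y_n}^2-2c_2(Y_n))/3$, as in Remark~\ref{rem:signature}, by substituting the closed formulas \eqref{eq:c2-closed} and \eqref{eq:K2-closed} of Proposition~\ref{prop:chern}. After dividing by $n^{k-3}$, I will collect coefficients of $n^2$, $n^1$ and $n^0$ separately. Only linear algebra in the quantities $k,f_0,f_1,t_2$ is needed, together with the definitions of $f_0$, $f_1$ and $T$; I do not expect to need the pair identity \eqref{eq:pair-identity} again, since it was already used in deriving \eqref{eq:K2-closed}.

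\emph{The $n^1$ coefficient.} In $K^2$ it is $2(2k-2f_1+2f_0)$, and in $2c_2$ it is $2(2k-2f_1+2f_0)$. These cancel, which proves the final sentence of the statement.

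\emph{The $n^2$ coefficient.} It is
\[
(9-5k+3f_1-4f_0)-2(3-2k+f_1-f_0)=3-k+f_1-2f_0.
\]
Since $f_1-2f_0=\sum_{r\ge2}(r-2)t_r$, this is exactly $Q_{\cA}$.

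\emph{The constant term.} It is
\[
(k+f_1-f_0+t_2)-2(f_1-t_2)=k+3t_2-f_1-f_0=R_{\cA}.
\]
Dividing by $3$ then yields \eqref{eq:signature-formula}.

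There is no genuine obstacle here. The only risk is a transcription or sign error in the closed forms, so I would cross-check the result on a simple case. The natural test is the general arrangement with $T=0$, where $Y_n$ is the smooth Fermat-type complete intersection $X_n$: there $\sig$ can be computed independently from $K^2=m_n^2n^{k-3}$ and $c_2$, using the usual Chern class formula for a complete intersection of $k-3$ hypersurfaces of degree $n$.
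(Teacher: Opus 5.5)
Your proposal is correct and is essentially the paper's argument: write $\sig(Y_n)=(K_{Y_n}^2-2c_2(Y_n))/3$, subtract twice \eqref{eq:c2-closed} from \eqref{eq:K2-closed}, note that the $n$-coefficient cancels, and identify the $n^2$ and constant coefficients with $Q_{\cA}$ (using $f_1-2f_0=\sum_{r\ge2}(r-2)t_r$) and $R_{\cA}$. Your coefficient arithmetic checks out, and the $T=0$ complete-intersection cross-check is a reasonable sanity test but is not needed for the proof.
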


\begin{proof}
By Noether's formula,
\[
        \sig(Y_n)=\frac{K_{Y_n}^2-2c_2(Y_n)}3.
\]
Subtract twice \eqref{eq:c2-closed} from \eqref{eq:K2-closed}.  The coefficient
of $n$ cancels, and the remaining coefficients are exactly \eqref{eq:QR}.
\end{proof}

\begin{remark}[Consistency with the classical Kummer-cover formulas]\label{rem:classical-consistency}
The numerical formulas above were derived directly from the projective model and
its local resolution.  As an external consistency check, the ratio of the leading
coefficients in \eqref{eq:K2-closed} and \eqref{eq:c2-closed} is
\[
 \frac{9-5k+3f_1-4f_0}{3-2k+f_1-f_0},
\]
which is the classical Hirzebruch characteristic number of a line arrangement;
see \cite{Hirzebruch,BarthelHirzebruchHoefer,Pokora2019}.  Likewise, for the Hesse,
dual-Hesse, and complete-quadrangle data treated below, our formulas give
$K^2=3c_2$ at the classical exponents $n=3,5,5$, respectively.  These agreements
provide independent numerical checks only; no identification with an alternative
``blow up first, then cover'' model is used in any proof.
\end{remark}

\section{Very ampleness from the Rees algebra}\label{sec:rees}

The next lemma is the standard very-ampleness consequence of the Rees-algebra
realization of a blow-up and the Segre embedding.  We include the proof to fix the
precise line bundle used later.

\begin{lemma}[Very ampleness from a generated ideal]\label{lem:rees-segre}
Let $X$ be a projective scheme with $\OO_X(1)$ very ample, let
$\cI\subset\OO_X$ be a coherent ideal sheaf, and let
\[
        \rho:Y=\Bl_{\cI}X\to X,
        \qquad
        A=\rho^*\OO_X(1),
\]
with exceptional Cartier divisor $E$ defined by
$\cI\OO_Y=\OO_Y(-E)$.  If $\cI(\nu)$ is globally generated for some
$\nu\ge1$, then
\[
        (\nu+1)A-E
\]
is very ample on $Y$.
\end{lemma}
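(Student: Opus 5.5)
The plan is to realize $Y$ concretely inside a product of projective spaces via the Rees algebra, and then pull back $\OO(1,1)$ through the Segre embedding.

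First, since $\cI(\nu)$ is globally generated, choose sections $s_0,\ldots,s_m\in H^0(X,\cI(\nu))$ generating it. These give a surjection $\OO_X(-\nu)^{m+1}\twoalpha\cI$, and hence a surjection of Rees algebras
\[
\operatorname{Sym}\bigl(\OO_X(-\nu)^{m+1}\bigr)\to\bigoplus_{d\ge0}\cI^d .
\]
Taking relative $\operatorname{Proj}$ yields a closed immersion
\[
Y=\Bl_{\cI}X=\operatorname{Proj}_X\bigoplus_d\cI^d\hookrightarrow
\mathbf P_X\bigl(\OO_X(-\nu)^{m+1}\bigr)\simeq X\times\PP^m .
\]
Under this immersion the tautological sheaf $\OO_{\mathbf P}(1)$ restricts to $\cI\OO_Y$ twisted by $\nu$. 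Keeping track of the twist, the pullback of $\OO_{\PP^m}(1)$ is $\OO_Y(-E)\otimes\rho^*\OO_X(\nu)=\nu A-E$. Equivalently, the morphism to $\PP^m$ is given by the sections $\rho^*s_i$, which generate $\cI\OO_Y(\nu)=\OO_Y(\nu A-E)$. This twist bookkeeping is the only step needing care. One must check that the identification $\mathbf P(\OO(-\nu)^{m+1})\simeq X\times\PP^m$ sends the tautological bundle to $\OO_{\PP^m}(1)\otimes\OO_X(-\nu)$, so that its restriction is $\cI\OO_Y=\OO_Y(-E)$ twisted appropriately.

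Second, compose with the closed embedding $X\hookrightarrow\PP^N$ given by $\OO_X(1)$. This gives a closed immersion
\[
Y\hookrightarrow X\times\PP^m\hookrightarrow\PP^N\times\PP^m .
\]
Following it with the Segre embedding $\PP^N\times\PP^m\hookrightarrow\PP^{(N+1)(m+1)-1}$ produces a closed immersion of $Y$. The pullback of the hyperplane class along this composite is
\[
\OO(1,1)|_Y=A+(\nu A-E)=(\nu+1)A-E ,
\]
so this class is very ample.

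Finally, I would note that $\nu\ge1$ is used only to ensure that the twist $\cI(\nu)$ makes sense as a globally generated sheaf with the stated sections. Nothing else beyond the standard closed-immersion property of relative $\operatorname{Proj}$ under a surjection of graded algebras is needed.
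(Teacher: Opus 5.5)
Your proposal is correct and follows the paper's argument: a surjection from a symmetric algebra onto the Rees algebra gives a closed immersion $Y\hookrightarrow X\times\PP^m$ with $\OO_{\PP^m}(1)|_Y=\nu A-E$, and the Segre embedding then gives very ampleness of $(\nu+1)A-E$. The only cosmetic difference is where the twist sits. You twist the free sheaf, using $\operatorname{Sym}(\OO_X(-\nu)^{m+1})\to\bigoplus_d\cI^d$, whereas the paper twists the Rees algebra to $\bigoplus_m\cI^m(\nu m)$ and maps $\operatorname{Sym}(V\otimes\OO_X)$ onto it; this is the same bookkeeping via Hartshorne~II, Lemma~7.9.
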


\begin{proof}
Choose a finite-dimensional vector space $V$ and a surjection
\[
        V\otimes\OO_X\twoheadrightarrow\cI(\nu).
\]
Let
\[
 \mathcal R=\bigoplus_{m\ge0}\cI^m,
 \qquad
 \mathcal R^{(\nu)}=\bigoplus_{m\ge0}\cI^m(\nu m).
\]
Twisting the $m$-th graded piece by the $m$-th power of an invertible sheaf does
not change relative Proj, and under the resulting identification
\[
 \OO_{\operatorname{Proj}\mathcal R^{(\nu)}}(1)
 \simeq \OO_Y(-E)\otimes\rho^*\OO_X(\nu)
 =\OO_Y(\nu A-E);
\]
see \cite[II, Lemma~7.9]{Hartshorne}.  Since the Rees algebra is generated in
degree one, the chosen sections induce a surjection of graded algebras
\[
 \operatorname{Sym}_{\OO_X}(V\otimes\OO_X)
 \twoheadrightarrow\mathcal R^{(\nu)}.
\]
Taking relative Proj gives a closed immersion
\[
        Y\hookrightarrow X\times\PP(V^\vee)
\]
whose pullback of $\OO_{\PP(V^\vee)}(1)$ is $\OO_Y(\nu A-E)$; compare
\cite[II,~3.6.2]{EGAII}.  The Segre line bundle
$\OO_X(1)\boxtimes\OO_{\PP(V^\vee)}(1)$ is very ample on the product, and its
restriction to $Y$ is $(\nu+1)A-E$, proving very ampleness.
\end{proof}

We now record an arrangement-theoretic way to produce the required generators.
For every point $p$ of multiplicity at least three, let
\[
        T_p=\{i:L_i\ni p\}\subset\{1,\ldots,k\}.
\]
Set
\begin{equation}\label{eq:arrangement-ideal}
        J_{\cA}=\bigcap_{r(p)\ge3}(z_i:i\in T_p)
        \subset\CC[z_1,\ldots,z_k].
\end{equation}

\begin{lemma}\label{lem:singular-ideal-general}
For every $n\ge2$,
\[
        J_{\cA}\OO_{X_n}=\cI_{\Sigma_n/X_n}.
\]
\end{lemma}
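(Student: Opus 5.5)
Both sides are ideal sheaves on $X_n$, so I will show that they have the same support and agree locally. First the supports. By Proposition~\ref{prop:local}, $\Sigma_n$ is the finite set of points $q\in X_n$ lying over points $p$ with $r(p)\ge3$. The zero locus of $J_{\cA}\OO_{X_n}$ is the union of the loci $\{z_i=0 \text{ for all } i\in T_p\}\cap X_n$. Since $z_i=0$ on $X_n$ exactly when $\ell_i(\pi_n(\cdot))=0$, such a point maps to a point of $\PP^2$ lying on all lines $L_i$ with $i\in T_p$. Because $|T_p|\ge2$ and the lines are distinct, that point is $p$ itself. So the zero loci coincide set-theoretically. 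Away from $\Sigma_n$ both ideals are the unit ideal, and the question is therefore local at each $q\in\pi_n^{-1}(p)$ with $r(p)\ge3$.

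At such a point $q$, the only primary component of $J_{\cA}$ whose zero locus contains $q$ is $P_p=(z_i:i\in T_p)$. For $p'\ne p$ there is an index $i\in T_{p'}\setminus T_p$, because two distinct points cannot share two common lines. Then $z_i(q)\ne0$, so $z_i$ becomes a unit in $\OO_{X_n,q}$ after dehomogenizing. Hence, in the chart $z_{j_0}\ne0$ with $j_0\notin T_p$,
\[
J_{\cA}\OO_{X_n,q}=P_p\OO_{X_n,q}.
\]
Here I use that localization commutes with finite intersections, and that an ideal containing a unit localizes to the whole ring.

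It then remains to show $P_p\OO_{X_n,q}=\mathfrak m_q$, and this is the main step. I will use the coordinate-preserving isomorphism \eqref{eq:completed-local-cone}. Under it, the images of $z_i$ for $i\in T_p$ are exactly the cone coordinates $x_i$. Moreover, the completed maximal ideal is carried exactly onto the vertex ideal $(x_i:i\in T_p)$, since the eliminated unit coordinates minus their values at $q$ lie in the ideal of the vanishing coordinates. Therefore
\[
P_p\widehat\OO_{X_n,q}=\mathfrak m_q\widehat\OO_{X_n,q}.
\]
Completion is faithfully flat, and for an ideal $I$ of a Noetherian local ring one has $I\widehat R\cap R=I$. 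This gives $P_p\OO_{X_n,q}=\mathfrak m_q$.

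Alternatively, one can avoid completion with a direct argument. The inclusion $P_p\OO_{X_n,q}\subseteq\mathfrak m_q$ is clear. For the reverse inclusion, the implicit-function argument in Proposition~\ref{prop:local} shows that $\mathfrak m_q/\mathfrak m_q^2$ is spanned by the classes of the $z_i$ with $i\in T_p$. Nakayama's lemma then gives $\mathfrak m_q=P_p\OO_{X_n,q}$.

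Finally, $\cI_{\Sigma_n/X_n}$ is the ideal of the reduced finite set $\Sigma_n$, so its stalk at each $q\in\Sigma_n$ is $\mathfrak m_q$ and it is trivial elsewhere. The stalks of the two ideal sheaves therefore agree everywhere, and $J_{\cA}\OO_{X_n}=\cI_{\Sigma_n/X_n}$ for every $n\ge2$.
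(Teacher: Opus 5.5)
Your proof is correct and follows the paper's argument. You localize at $q$ so that every component of $J_{\cA}$ except $(z_i:i\in T_p)$ becomes the unit ideal, then use the coordinate-preserving completed cone isomorphism and faithful flatness of completion to identify that ideal with $\mathfrak m_q$; your alternative (the Jacobian computation showing $\mathfrak m_q/\mathfrak m_q^2$ is spanned by the $z_i$, $i\in T_p$, followed by Nakayama) is also valid and slightly more elementary, since it avoids completion.
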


\begin{proof}
Let $q\in\Sigma_n$ lie over a multiple point $p$.  Exactly the coordinates
$z_i$ with $i\in T_p$ vanish at $q$.  For any other multiple point $p'\ne p$,
some line through $p'$ does not pass through $p$, so the corresponding
coordinate is a unit at $q$; hence the factor indexed by $p'$ in
\eqref{eq:arrangement-ideal} becomes the unit ideal.  Therefore
\[
        J_{\cA}\OO_{X_n,q}=(z_i:i\in T_p)\OO_{X_n,q}.
\]
To identify this ideal scheme-theoretically, pass to the maximal-ideal-adic
completion.  Under the coordinate-preserving completed isomorphism
\eqref{eq:completed-local-cone} in Proposition~\ref{prop:local}, the images of
the ambient coordinates $z_i$, $i\in T_p$, are exactly the cone coordinates
$x_i$.  They therefore generate the completed maximal ideal:
\[
 (z_i:i\in T_p)\widehat{\OO}_{X_n,q}
     =\mathfrak m_q\widehat{\OO}_{X_n,q}.
\]
Since $\OO_{X_n,q}\to\widehat{\OO}_{X_n,q}$ is faithfully flat, equality after
completion descends to
\[
        (z_i:i\in T_p)\OO_{X_n,q}=\mathfrak m_q.
\]
At a point outside $\Sigma_n$, every factor in
\eqref{eq:arrangement-ideal} contains a unit locally, so
$J_{\cA}\OO_{X_n}$ is the unit ideal there.  This proves the asserted equality
of ideal sheaves, not merely equality of their supports.
\end{proof}

We will repeatedly use the following elementary translation.  If $q\in X_n$ and
$x\in\PP^2$ is the point corresponding to $\pi_n(q)\in\Lambda$, then
\begin{equation}\label{eq:zero-coordinate-translation}
        z_i(q)=0\quad\Longleftrightarrow\quad \ell_i(x)=0.
\end{equation}
Indeed $\pi_n(q)=[z_1(q)^n:\cdots:z_k(q)^n]$ and, under
$\Lambda\simeq\PP^2$, this point is
$[\ell_1(x):\cdots:\ell_k(x)]$.  Thus a square-free monomial
$\prod_{i\in S}z_i$ vanishes at $q$ exactly when the product
$\prod_{i\in S}\ell_i$ vanishes at $x$.

\begin{definition}[A generation datum]\label{def:generation-datum}
A degree-$\nu$ generation datum for $\cA$ is a finite collection
$\mathcal S$ of subsets $S\subset\{1,\ldots,k\}$ of cardinality $\nu$ such that:
\begin{enumerate}[label=\textup{(G\arabic*)}]
\item each $S$ meets every $T_p$;
\item for every multiple point $p$ and every $i\in T_p$, some
      $S\in\mathcal S$ satisfies $S\cap T_p=\{i\}$;
\item the common zero locus on $\PP^2$ of the monomials
      $\prod_{i\in S}\ell_i$, $S\in\mathcal S$, is precisely the set of
      points of multiplicity at least three.
\end{enumerate}
\end{definition}

\begin{proposition}\label{prop:generation-datum}
If $\cA$ has a degree-$\nu$ generation datum, then for every $n\ge2$ the
monomials
\[
        M_S=\prod_{i\in S}z_i,\qquad S\in\mathcal S,
\]
globally generate $\cI_{\Sigma_n/X_n}(\nu)$.  Consequently
\begin{equation}\label{eq:H-general}
        H_n=(\nu+1)A_n-E_n
\end{equation}
is very ample on $Y_n$.
\end{proposition}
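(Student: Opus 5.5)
The plan is to verify three things in turn. First, each $M_S$ is a section of $\cI_{\Sigma_n/X_n}(\nu)$. Second, these sections generate the stalk at every point of $X_n$. Third, the very ampleness follows from Lemma~\ref{lem:rees-segre}.

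\emph{Step 1: each $M_S$ is a section.} Each $M_S$ is a degree-$\nu$ form in the ambient coordinates, so it gives a section of $\OO_{X_n}(\nu)$. By (G1), $S$ meets every $T_p$, so $M_S$ is divisible by some $z_i$ with $i\in T_p$ for every multiple point $p$ of multiplicity at least three. Hence $M_S\in J_{\cA}$, and by Lemma~\ref{lem:singular-ideal-general} its image lies in $H^0(\cI_{\Sigma_n/X_n}(\nu))$.

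\emph{Step 2a: points outside $\Sigma_n$.} Let $q\notin\Sigma_n$ and let $x\in\PP^2$ correspond to $\pi_n(q)$. By Proposition~\ref{prop:local}, $x$ is not a point of multiplicity at least three. Condition (G3) then gives some $S$ with $\prod_{i\in S}\ell_i(x)\ne0$. By the translation \eqref{eq:zero-coordinate-translation}, $M_S(q)\ne0$. The stalk of $\cI(\nu)$ at $q$ is $\OO_{X_n,q}(\nu)$, so this section generates it.

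\emph{Step 2b: points $q\in\Sigma_n$ over a multiple point $p$.} By Lemma~\ref{lem:singular-ideal-general} (or its proof), the stalk of $\cI_{\Sigma_n}$ at $q$ is $\mathfrak m_q=(z_i:i\in T_p)\OO_{X_n,q}$. Trivialize $\OO(\nu)$ near $q$ by $z_{j_0}^\nu$ for some $j_0\notin T_p$. For each $i\in T_p$, (G2) supplies $S$ with $S\cap T_p=\{i\}$. The coordinates $z_j$ with $j\in S\setminus\{i\}$ lie outside $T_p$, so they do not vanish at $q$ and are units in $\OO_{X_n,q}$. Therefore $M_S/z_{j_0}^\nu=z_i\cdot(\text{unit})$. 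Letting $i$ range over $T_p$ produces all the generators $z_i$ of $\mathfrak m_q$, so the sections $M_S$ generate the stalk. Nakayama is not even needed here.

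\emph{Step 3: very ampleness.} Since $\cI_{\Sigma_n/X_n}(\nu)$ is globally generated, Lemma~\ref{lem:rees-segre} applies to $Y_n=\Bl_{\Sigma_n}X_n$ with $\OO_{X_n}(1)$, which is very ample as the restriction of the hyperplane bundle. The exceptional divisor is identified via \eqref{eq:ideal-exceptional} as $\cI\OO_{Y_n}=\OO_{Y_n}(-E_n)$. This gives that $(\nu+1)A_n-E_n$ is very ample.

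The main point requiring care is Step 2b. There one must use the scheme-theoretic equality $\mathfrak m_q=(z_i:i\in T_p)$, which was established via the coordinate-preserving completion. Without it one would only be generating an ideal with the correct radical.
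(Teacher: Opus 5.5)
Your proof is correct and follows the paper's argument essentially step for step. (G1) puts each $M_S$ in $J_{\cA}$; (G2) together with the coordinate-preserving identification $\mathfrak m_q=(z_i:i\in T_p)$ from Lemma~\ref{lem:singular-ideal-general} gives generation at points of $\Sigma_n$; (G3) with \eqref{eq:zero-coordinate-translation} handles the other points; and Lemma~\ref{lem:rees-segre} with \eqref{eq:ideal-exceptional} gives very ampleness. Your remark that Nakayama is unnecessary is also right, because the (G2) monomials localize directly to units times the generators $z_i/z_{j_0}$.
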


\begin{proof}
By (G1), each $M_S$ lies in $J_{\cA}$.  At $q$ over a multiple point $p$,
$M_S$ localizes to a unit times
$\prod_{i\in S\cap T_p}z_i$.  Condition (G2) therefore gives, in the stalk at $q$, elements whose images
generate the maximal ideal $(z_i:i\in T_p)$.  By Nakayama's lemma the chosen
sections generate the stalk of $\cI_{\Sigma_n/X_n}(\nu)$ at every point of
$\Sigma_n$.  By \eqref{eq:zero-coordinate-translation}, condition (G3) says
that at every point outside $\Sigma_n$ at least one chosen section is nonzero,
so the stalk there is also generated.  Lemma~\ref{lem:singular-ideal-general}
then identifies the ideal sheaf, and Lemma~\ref{lem:rees-segre} gives very
ampleness.
\end{proof}

\section{The Hesse family}\label{sec:hesse}

Let $\cH$ be the classical Hesse arrangement of twelve lines.  It consists of
the four singular triangles of the Hesse pencil.  Its nine base points are
fourfold points and there are twelve additional double points:
\begin{equation}\label{eq:hesse-incidence}
        k=12,\qquad t_2=12,\qquad t_4=9.
\end{equation}
Let the four triangles have line sets
\[
        \Pi_1,\Pi_2,\Pi_3,\Pi_4,
        \qquad |\Pi_j|=3,
\]
and put
\[
        g_j=\prod_{i\in\Pi_j}z_i\qquad(j=1,\ldots,4).
\]

\begin{proposition}\label{prop:hesse-generation}
For every $n\ge2$, the four cubic monomials $g_1,\ldots,g_4$ globally generate
$\cI_{\Sigma_n/X_n}(3)$.  Consequently
\[
        H_n=4A_n-E_n
\]
is very ample on $Y_n$.
\end{proposition}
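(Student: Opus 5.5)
The plan is to verify that $\mathcal S=\{\Pi_1,\Pi_2,\Pi_3,\Pi_4\}$ is a degree-$3$ generation datum for $\cH$ in the sense of Definition~\ref{def:generation-datum}. Proposition~\ref{prop:generation-datum} with $\nu=3$ then gives both conclusions: global generation of $\cI_{\Sigma_n/X_n}(3)$ by $g_1,\ldots,g_4$, and very ampleness of $H_n=4A_n-E_n$. All the input is the classical combinatorics of the Hesse configuration. The nine fourfold points are the base points of the Hesse pencil. Each singular member $\Pi_j$ is a triangle whose three lines partition the nine base points into three triples, so each base point lies on exactly one line of each triangle. In particular $|T_p|=4$ and $T_p$ meets each $\Pi_j$ in exactly one index.

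For (G1): for every fourfold point $p$, each $\Pi_j$ contains the line of the $j$-th triangle through $p$, so $\Pi_j\cap T_p\neq\emptyset$. For (G2): given $p$ and $i\in T_p$, let $j$ be the triangle containing $L_i$. Since $T_p\cap\Pi_j$ is a single index and $i$ belongs to it, we get $\Pi_j\cap T_p=\{i\}$. It remains to note that the four lines through $p$ lie in four distinct triangles. This is exactly the statement that each base point is on one line of each singular member, because two lines of the same triangle meet at a vertex, which is a double point and not a base point.

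For (G3), the common zero locus of the cubics $\ell_{\Pi_j}:=\prod_{i\in\Pi_j}\ell_i$ is the intersection of the four singular cubics of the pencil. Any two distinct members of a pencil of cubics meet exactly in the base locus, so this intersection is the nine base points, which are precisely the points of multiplicity $\ge3$. By the translation \eqref{eq:zero-coordinate-translation}, this is the form of (G3) needed in the proof of Proposition~\ref{prop:generation-datum}.

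The main point to get right is (G3), together with the claim that the vertices of the triangles, namely the twelve double points, are not common zeros. Using the pencil structure, this reduces to the fact that the $\ell_{\Pi_j}$ are distinct members of a pencil whose base locus is exactly the nine reduced points. If that fact needs justification, I would check it with the explicit normal form $x^3+y^3+z^3$, $xyz$ and the three other singular members $(x+y+z)(x+\omega y+\omega^2z)(x+\omega^2y+\omega z)$, and so on.
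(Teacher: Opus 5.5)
Your proposal is correct and follows essentially the same route as the paper. The paper also uses that each of the nine base points lies on exactly one line of each of the four Hesse triangles, so that the $g_j$ localize to units times the four vanishing coordinates and generate $\mathfrak m_q$ (your (G1)--(G2)). It also uses that the common zero locus of the four triangle cubics is the base locus of the Hesse pencil (your (G3)), and then applies Proposition~\ref{prop:generation-datum} with $\nu=3$; your explicit check that the triangle vertices are double points rather than base points only spells out what the paper asserts in one line.
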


\begin{proof}
At each of the nine fourfold points, exactly one line from each of the four
triangles passes through the point.  Hence, at a point $q\in\Sigma_n$ above such
a base point, each $g_j$ is a unit times one of the four vanishing Kummer
coordinates.  The four monomials therefore generate the maximal ideal
$\mathfrak m_q$.

The common zero locus in $\PP^2$ of the four products
$\prod_{i\in\Pi_j}\ell_i$ is the base locus of the Hesse pencil, namely the nine
fourfold points.  Thus the four monomials have no common zero on $X_n$ outside
$\Sigma_n$.  Proposition~\ref{prop:generation-datum} applies with $\nu=3$.
\end{proof}

\begin{remark}[An explicit ambient projective space]\label{rem:hesse-ambient}
The four cubic generators give a closed immersion
\[
   Y_n\hookrightarrow X_n\times\PP^3.
\]
For the Hesse arrangement $X_n\subset\PP^{11}$, and composition with the Segre
embedding gives an explicit closed immersion $Y_n\hookrightarrow\PP^{47}$.  The
pullback of $\OO_{\PP^{11}}(1)\boxtimes\OO_{\PP^3}(1)$ is
$4A_n-E_n=H_n$.  Thus this closed immersion is defined by a finite-dimensional
subspace of $H^0(Y_n,H_n)$ and serves as an explicit witness that $H_n$ is very
ample.  In particular the complete linear system $|H_n|$ also embeds $Y_n$; this is
the embedding we may take in the ESW statement.  No formula for the dimension of
the complete linear system is needed for the argument.
\end{remark}

\begin{proposition}[Independent ampleness check and sharpness]\label{prop:hesse-sharp}
For every $n\ge2$, the divisor $4A_n-E_n$ on the Hesse Kummer resolution is
ample.  The divisor $3A_n-E_n$ is nef but not ample.  Thus the coefficient $4$
is the first integral coefficient in the ray $\alpha A_n-E_n$ that can be ample.
This proposition is independent of the Rees-algebra proof; the latter remains
essential above because it proves \emph{very} ampleness.
\end{proposition}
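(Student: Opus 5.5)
We show nefness of $3A_n-E_n$ directly from the four cubic monomials, exhibit a curve on which it has degree zero, and then deduce ampleness of $4A_n-E_n$ from Nakai--Moishezon, without using the Rees algebra.

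\emph{Step 1: $3A_n-E_n$ is nef.} By Proposition~\ref{prop:hesse-generation}, the cubic monomials $g_1,\dots,g_4$ generate $\cI_{\Sigma_n/X_n}(3)$. By \eqref{eq:ideal-exceptional}, their pullbacks are sections of $\OO_{Y_n}(3A_n-E_n)$ without common zeros, so this class is globally generated and hence nef. Only the global generation in Proposition~\ref{prop:generation-datum} enters here; the Segre step is not needed.

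\emph{Step 2: $3A_n-E_n$ is not ample.} Fix one triangle, say $\Pi_1$, and one line $L_i$ with $i\in\Pi_1$. Let $\Gamma$ be the strict transform on $Y_n$ of an irreducible component of $\pi_n^{-1}(L_i)$. The other lines of the arrangement meeting $L_i$ fall into two groups.
\begin{itemize}[label=--]
\item The remaining two lines of $\Pi_1$ meet $L_i$ in double points of $\cH$, since two lines of one triangle are not concurrent with any other line.
\item The other nine lines meet $L_i$ at the three fourfold points lying on $L_i$.
\end{itemize}
The section $g_1$ contains $z_i$, so it vanishes identically on $\pi_n^{-1}(L_i)$. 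Instead we use $g_2$, which does not vanish on $L_i$, and compute
\[
(3A_n-E_n)\cdot\Gamma=\deg\bigl(\text{zeros of } g_2|_\Gamma \text{ off } E_n\bigr).
\]
The zeros of $g_2$ on $L_i$ are exactly the three points where $L_i$ meets the lines of $\Pi_2$, and these are the three fourfold points. After the blow-up they are absorbed into $E_n$, so the residual degree is $0$. A cleaner route to the same conclusion is to note that $\Gamma$ is contracted by the morphism to $\PP^3$ defined by the $g_j$: the map $[g_1:g_2:g_3:g_4]$ factors through the Hesse pencil map $\PP^2\dashrightarrow\PP^1$, and every arrangement line lies in a fibre of that pencil. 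So $3A_n-E_n$ is globally generated with image of dimension at most one, in particular it is not big, and $(3A_n-E_n)^2=9n^9-9n^9=0$ confirms this. Hence $3A_n-E_n$ is nef and not ample.

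\emph{Step 3: $4A_n-E_n$ is ample.} Write $4A_n-E_n=(3A_n-E_n)+A_n$, a sum of two nef classes. Its square is $(16-9)n^9=7n^9>0$. For an irreducible curve $C$ we argue in two cases.
\begin{itemize}[label=--]
\item If $C$ is not contracted by $\rho_n$, then $A_n\cdot C>0$.
\item If $C$ is an exceptional curve, then $A_n\cdot C=0$ and $-E_n\cdot C=n^{2}>0$.
\end{itemize}
In both cases the intersection number is positive, so Nakai--Moishezon gives ampleness. For general exponent $n$ the formulas read $A_n^2=n^{k-3}$ and $E_n^2=-Tn^{k-3}$, with $T=9$.

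The delicate point is Step 2, specifically justifying that the pencil map factors as claimed, so that $3A_n-E_n$ has square $0$ and is therefore not ample. The intersection computation $(3A_n-E_n)^2=0$ already suffices by itself, since an ample class has positive square. Sharpness of the coefficient $4$ follows: for any coefficient $\alpha\le3$, $(\alpha A_n-E_n)^2=(\alpha^2-9)n^9\le0$, so no such class is ample.
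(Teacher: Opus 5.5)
Your argument proves the mathematical claims, but it takes a different route from the paper. The trade-off is that it gives up the independence the proposition advertises.

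The paper never uses the cubic generators. It works as follows:
\begin{itemize}
\item It bounds $\sum_{p\in B}\mu_p\le 3d$ for every non-arrangement plane curve of degree $d$, using B\'ezout against the twelve lines.
\item It shows that a local equation of $C$ pulls back with order exactly $n\mu_p$ at each cone vertex, giving $(\alpha A_n-E_n)\cdot\widetilde{\pi_n^{-1}(C)}=n^{10}(\alpha d-\sum_p\mu_p)$.
\item It uses transitivity of the deck group to pass to individual components.
\item It treats arrangement lines (value $(\alpha-3)n^9$) and exceptional curves (value $n^2$) separately, then applies Nakai--Moishezon.
\end{itemize}

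You instead get nefness of $3A_n-E_n$ from global generation by $g_1,\dots,g_4$. You then reduce ampleness of $4A_n-E_n=(3A_n-E_n)+A_n$ to positivity of $A_n$ on non-contracted curves, plus $n^2$ on exceptional curves. This is shorter and avoids the order-of-vanishing and Galois-orbit arguments. However, your Step~1 rests on Proposition~\ref{prop:hesse-generation}, which is exactly the input of Lemma~\ref{lem:rees-segre}. Once that input is granted, the Segre step already gives \emph{very} ampleness of $4A_n-E_n$. So your Step~3 re-derives a weaker conclusion from the same hypothesis rather than checking it independently. The paper's computation would survive an error in the generation argument, and its line computation shows directly that $\alpha A_n-E_n$ is not even nef for $\alpha<3$.

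There is a small slip in your sharpness sentence. The bound $(\alpha^2-9)n^9\le0$ holds only for $|\alpha|\le 3$; for integers $\alpha\le-4$ the square is positive. Your Step~2 repairs this: $(3A_n-E_n)\cdot\Gamma=0$ and $A_n\cdot\Gamma>0$ give $(\alpha A_n-E_n)\cdot\Gamma=(\alpha-3)A_n\cdot\Gamma<0$ for every $\alpha<3$.

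Two side claims in Step~2 are slightly loose, though your square-zero argument already settles non-ampleness without them.
\begin{itemize}
\item It is $[g_1^n:\dots:g_4^n]$, not $[g_1:\dots:g_4]$, that factors through the Hesse pencil. The latter lands in the preimage of a line under the $n$-th power map, which is still a curve, so your conclusion stands.
\item ``Absorbed into $E_n$'' needs the check that the section induced by $g_2$ does not vanish at $\Gamma\cap E_q$. This holds because at the points of $C_{n,4}$ where $x_i=0$, the other cone coordinates are nonzero.
\end{itemize}
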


\begin{proof}
Let $B$ be the set of the nine fourfold points of the Hesse arrangement.  Let
$C\subset\PP^2$ be an irreducible curve of degree $d$ that is not an arrangement
line, and let $\mu_p=\operatorname{mult}_p(C)$ for $p\in B$.  Summing B\'ezout
against the twelve arrangement lines gives
\[
  4\sum_{p\in B}\mu_p\le 12d,
  \qquad\text{hence}\qquad
  \sum_{p\in B}\mu_p\le3d,
\]
because every point of $B$ lies on four arrangement lines.

Let $\widetilde Z$ be a component of the strict transform on $Y_n$ of the reduced
inverse image of $C$.  For the full inverse image, $\pi_n^*C\equiv ndA_n$.  At a
point $q$ above $p\in B$, the pullback of a local equation of $C$ has order
exactly $n\mu_p$ at the cone vertex.  Indeed, choose local coordinates $u,v$ at
$p$.  In the cone model of Proposition~\ref{prop:local}, the pullbacks of $u$ and
$v$ have initial homogeneous parts $U_n,V_n$ of degree $n$ in the cone
coordinates.  If $f_{\mu_p}(u,v)$ is the nonzero degree-$\mu_p$ initial form of
a local equation of $C$, then the first possible homogeneous term of its pullback
is $f_{\mu_p}(U_n,V_n)$, of degree $n\mu_p$.  This term is nonzero in the cone
ring: on the exceptional curve $C_{n,4}$ it is the pullback of the nonzero
homogeneous polynomial $f_{\mu_p}$ along the finite surjective map
$C_{n,4}\to\PP^1$ of tangent directions.  Hence no cancellation raises the
order.  There are $n^7$ points of $X_n$ above each fourfold point, and each
corresponding exceptional curve has square $-n^2$.  Therefore, for
$H_{\alpha}=\alpha A_n-E_n$, the full strict transform satisfies
\[
 H_{\alpha}\cdot \widetilde{\pi_n^{-1}(C)}
   =n^{10}\left(\alpha d-\sum_{p\in B}\mu_p\right).
\]
If the inverse image is reducible, its irreducible components correspond to the
height-one primes of the normal Kummer cover lying over the generic point of
$C$.  The generic Kummer extension is Galois, and its Galois (diagonal deck)
group acts transitively on the primes above a fixed prime of the base.  Thus it
acts transitively on these components.  Since the action preserves $A_n$ and
$E_n$, all components have the same intersection with $H_{\alpha}$ and hence the
same sign.  Therefore $H_4:=4A_n-E_n$ has positive intersection with
every component lying over such a curve, while $H_3:=3A_n-E_n$ has nonnegative
intersection.

For an arrangement line, the same intersection calculation (or direct use of
its three fourfold points) gives
\[
       H_{\alpha}\cdot\widetilde{\pi_n^{-1}(L_i)}
       =(\alpha-3)n^9.
\]
Finally, for every exceptional curve $E_q\simeq C_{n,4}$ one has
$H_{\alpha}\cdot E_q=n^2>0$.  Every irreducible curve on $Y_n$ is either
exceptional or maps to a curve of $\PP^2$, so these cases exhaust the curves.
Moreover
\[
      (4A_n-E_n)^2=7n^9>0,
      \qquad
      (3A_n-E_n)^2=0.
\]
Nakai--Moishezon therefore gives ampleness of $4A_n-E_n$, while the intersection
inequalities show that $3A_n-E_n$ is nef; its square zero prevents ampleness.
\end{proof}

\begin{remark}[Why the cubic generation is critical]\label{rem:hesse-sharp-degree}
The preceding proposition shows that the Hesse construction sits on the first
integral ample level above the nef boundary.  The degree-$3$ generation in
Proposition~\ref{prop:hesse-generation} is correspondingly structural, not a
cosmetic optimization.  If one only knew generation in degree $4$, the standard
Rees-algebra/Segre construction would give $5A_3-E_3$, and
\[
       (5A_3-E_3)^2=(25-9)3^9=16\cdot3^9=\sig(Y_3).
\]
The strict numerical obstruction would then give no counterexample at the
headline exponent $n=3$ (although the Hesse family would enter the strict
non-existence range again for $n\ge4$).
\end{remark}

\begin{remark}[The critical degree $n=3$]\label{rem:hesse-n3-degree}
For the headline member $n=3$, the homogeneous ideal of
$X_3\subset\PP^{11}$ already contains the cubic relation equations
$\sum_i\lambda_i z_i^3$.  This does not create a relation among the four
sections $g_1,\ldots,g_4$: the degree-three defining relations are linear
combinations of the pure cubes $z_i^3$, whereas each $g_j$ is a square-free
monomial in three distinct variables.  Hence the $g_j$ remain linearly
independent in $H^0(X_3,\OO_{X_3}(3))$.  Likewise, although the four products
$\prod_{i\in\Pi_j}\ell_i$ downstairs belong to the two-dimensional Hesse pencil,
that linear dependence does not lift to the Kummer monomials $g_j$; only their
$n$-th powers are related to the corresponding products under the power map.
The local generation argument above is therefore unchanged at $n=3$.
\end{remark}

For \eqref{eq:hesse-incidence}, one has
\[
 f_0=21,\qquad f_1=60,\qquad T=9,
\]
and therefore
\[
 Q_{\cH}=9,\qquad R_{\cH}=-33.
\]

\begin{theorem}[Hesse family]\label{thm:hesse-family}
For every integer $n\ge3$, the smooth polarized surface
\[
        (Y_n,H_n),\qquad H_n=4A_n-E_n,
\]
associated with the exponent-$n$ Kummer cover of the Hesse arrangement carries
no Ulrich bundle of any rank.  More precisely,
\begin{equation}\label{eq:hesse-numerics}
        H_n^2=7n^9,
        \qquad
        \sig(Y_n)=(3n^2-11)n^9,
\end{equation}
and hence
\begin{equation}\label{eq:hesse-ratio}
        \frac{H_n^2}{\sig(Y_n)}=\frac7{3n^2-11}\longrightarrow0.
\end{equation}
Equivalently, $\uc(Y_n,H_n)=\infty$ for every $n\ge3$.
\end{theorem}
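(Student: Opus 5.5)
The proof combines three ingredients already established: very ampleness of $H_n$ from Proposition~\ref{prop:hesse-generation}, the intersection numbers of Corollary~\ref{cor:intersection}, and the signature formula of Corollary~\ref{cor:signature-formula}. These feed the obstruction of Proposition~\ref{prop:obstruction}. First, Proposition~\ref{prop:hesse-generation} shows that $H_n=4A_n-E_n$ is very ample on the smooth surface $Y_n$ for every $n\ge2$. Hence $(Y_n,H_n)$ is a smooth polarized surface to which Proposition~\ref{prop:obstruction} applies.

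\emph{Computing $H_n^2$.} With the Hesse data $k=12$, $t_2=12$, $t_4=9$, we have $T=9$ and $n^{k-3}=n^9$. Corollary~\ref{cor:intersection} gives $A_n\cdot E_n=0$, $E_n^2=-9n^9$ and $A_n^2=n^9$. Therefore
\[
 H_n^2=16A_n^2-8A_n\cdot E_n+E_n^2=16n^9-9n^9=7n^9.
\]

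\emph{Computing the signature.} From $f_0=21$ and $f_1=60$ we get $Q_{\cH}=3-12+2\cdot 9=9$ and $R_{\cH}=12+36-60-21=-33$. Substituting into \eqref{eq:signature-formula} yields
\[
 \sig(Y_n)=\frac{n^9}{3}(9n^2-33)=(3n^2-11)n^9,
\]
which is \eqref{eq:hesse-numerics}. Dividing the two gives \eqref{eq:hesse-ratio}, and the ratio tends to $0$ because its denominator is quadratic in $n$. As a sanity check I would also confirm the signature via \eqref{eq:K2-direct} and \eqref{eq:c2-closed}, since Remark~\ref{rem:signature} identifies $\sig$ with $K^2-8\chi$.

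\emph{Conclusion.} By Remark~\ref{rem:signature}, the quantity in Proposition~\ref{prop:obstruction} is exactly $\sig(Y_n)$. It remains to check that $7n^9<(3n^2-11)n^9$, i.e.\ $3n^2>18$, i.e.\ $n^2>6$, which holds precisely for $n\ge3$. (At $n=2$ the inequality fails, which explains the hypothesis.) Proposition~\ref{prop:obstruction} then rules out Ulrich bundles of every rank, so $\uc(Y_n,H_n)=\infty$.

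There is essentially no obstacle, since all the geometric input is already in place. The only points needing care are the correct incidence counts for the Hesse arrangement and the observation that the threshold $n^2>6$ is exactly what restricts the theorem to $n\ge3$.
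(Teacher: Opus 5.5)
Your proposal is correct and matches the paper's proof step for step. The steps are: very ampleness from Proposition~\ref{prop:hesse-generation}; $H_n^2=7n^9$ from Corollary~\ref{cor:intersection}; $\sig(Y_n)=(3n^2-11)n^9$ from Corollary~\ref{cor:signature-formula} with $Q_{\cH}=9$, $R_{\cH}=-33$; and then Proposition~\ref{prop:obstruction} once $7<3n^2-11$, i.e.\ $n\ge3$.
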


\begin{proof}
Proposition~\ref{prop:hesse-generation} gives very ampleness.  Since
$A_n^2=n^9$, $E_n^2=-9n^9$, and $A_n\cdot E_n=0$, one gets
\[
        H_n^2=(4A_n-E_n)^2=7n^9.
\]
For the Hesse incidence data, Corollary~\ref{cor:signature-formula} with
$Q=9$ and $R=-33$ gives
\[
        \sig(Y_n)=(3n^2-11)n^9.
\]
Thus $H_n^2<\sig(Y_n)$ exactly when $7<3n^2-11$, i.e. for $n\ge3$.
Proposition~\ref{prop:obstruction} then excludes Ulrich bundles of every rank.
\end{proof}

The Hesse members are pairwise non-isomorphic.  Indeed
\begin{equation}\label{eq:hesse-K2-growth}
 K_{Y_n}^2=n^9(45n^2-108n+63)
           =9n^9(n-1)(5n-7),
\end{equation}
which is strictly increasing for $n\ge3$.  Thus the Hesse family alone already
contains infinitely many pairwise non-isomorphic smooth surfaces.

\subsection{\texorpdfstring{The ball-quotient member $n=3$}{The ball-quotient member n=3}}

At $n=3$, Proposition~\ref{prop:chern} gives
\[
        K_{Y_3}=15A_3-3E_3=3(5A_3-E_3),
\]
and
\[
        K_{Y_3}^2=144\cdot3^9,
        \qquad
        c_2(Y_3)=48\cdot3^9.
\]
Since $H_3=4A_3-E_3$ is ample and $A_3$ is nef,
$5A_3-E_3=H_3+A_3$ is ample.  Thus $K_{Y_3}$ is ample and
$K_{Y_3}^2=3c_2(Y_3)$; by the equality case of Miyaoka--Yau and its uniformization theorem
\cite{Miyaoka1977,Yau1977,GrebKebekusPeternellTaji}, $Y_3$ is a compact ball quotient.  This direct
argument is all that is used below; no identification with a particular
classical resolution is needed.  Moreover
\[
        \chi(\OO_{Y_3})=16\cdot3^9,
        \qquad
        \sig(Y_3)=16\cdot3^9=314928,
\]
whereas
\[
        H_3^2=7\cdot3^9=137781.
\]
If
\[
        L=K_{Y_3}/3=5A_3-E_3,
\]
then
\[
        H_3=L-A_3.
\]
Thus this first member realizes Proposition~\ref{prop:ball-displacement} in the
simplest possible form.

\section{The general non-existence mechanism}\label{sec:mechanism}

We now combine the signature formula and the Rees-algebra/Segre construction.

\begin{theorem}[Kummer-family mechanism]\label{thm:kummer-family}
Let $\cA$ be a line arrangement with a degree-$\nu$ generation datum, and set
\[
 a=\nu+1,
 \qquad
 T=\sum_{r\ge3}t_r,
\]
with $Q_{\cA},R_{\cA}$ as in \eqref{eq:QR}.  Then for every $n\ge2$ the divisor
\[
        H_n=aA_n-E_n
\]
is very ample and
\begin{align}
 H_n^2&=(a^2-T)n^{k-3},\label{eq:H2-family}\\
 \sig(Y_n)&=\frac{n^{k-3}}3(Q_{\cA}n^2+R_{\cA}).\label{eq:sig-family}
\end{align}
If
\begin{equation}\label{eq:family-threshold}
        Q_{\cA}n^2+R_{\cA}>3(a^2-T),
\end{equation}
then $(Y_n,H_n)$ carries no Ulrich bundle of any rank.
If $Q_{\cA}>0$, condition \eqref{eq:family-threshold} holds for all sufficiently
large $n$ and
\begin{equation}\label{eq:ratio-family}
 \frac{H_n^2}{\sig(Y_n)}
 =\frac{3(a^2-T)}{Q_{\cA}n^2+R_{\cA}}
 \xrightarrow[n\to\infty]{}0.
\end{equation}
\end{theorem}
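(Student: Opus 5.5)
The proof assembles results already established; no new geometry is needed. First, very ampleness: by hypothesis $\cA$ has a degree-$\nu$ generation datum. Proposition~\ref{prop:generation-datum} then says that for every $n\ge2$ the square-free monomials $M_S$ globally generate $\cI_{\Sigma_n/X_n}(\nu)$. Lemma~\ref{lem:rees-segre}, applied with $\cI=\cI_{\Sigma_n/X_n}$ and using the identification \eqref{eq:ideal-exceptional} of the exceptional Cartier divisor with the reduced $E_n$, gives that $H_n=(\nu+1)A_n-E_n=aA_n-E_n$ is very ample.

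Next, the two numerical formulas. For \eqref{eq:H2-family}, expand $(aA_n-E_n)^2=a^2A_n^2-2aA_n\cdot E_n+E_n^2$. Then substitute $A_n^2=n^{k-3}$ from \eqref{eq:A-square}, together with $A_n\cdot E_n=0$ and $E_n^2=-Tn^{k-3}$ from Corollary~\ref{cor:intersection}. Formula \eqref{eq:sig-family} is exactly Corollary~\ref{cor:signature-formula}.

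For non-existence, compare the two quantities. Since $n^{k-3}>0$, the inequality $H_n^2<\sig(Y_n)$ is equivalent to $3(a^2-T)<Q_{\cA}n^2+R_{\cA}$, which is \eqref{eq:family-threshold}. Under this condition Proposition~\ref{prop:obstruction}, applied to the smooth surface $Y_n$ with the very ample class $H_n$, rules out Ulrich bundles of every rank, since such a bundle would force $H_n^2\ge\sig(Y_n)$.

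Finally, the asymptotics. If $Q_{\cA}>0$, the left side of \eqref{eq:family-threshold} grows quadratically in $n$, while the right side is a constant independent of $n$. Hence the threshold holds for all large $n$. Once $Q_{\cA}n^2+R_{\cA}>0$, which also holds for large $n$, the ratio $H_n^2/\sig(Y_n)$ equals $3(a^2-T)/(Q_{\cA}n^2+R_{\cA})$ after cancelling $n^{k-3}$, and this tends to $0$.

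There is no real obstacle here. The only points needing care are bookkeeping: the exceptional divisor produced by the Rees construction must coincide with the reduced $E_n$ used in the intersection numbers, which is exactly \eqref{eq:ideal-exceptional}, and one must check that the sign of the denominator is positive before taking the limit.
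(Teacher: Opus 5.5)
Your proposal is correct and follows the paper's proof essentially step for step: very ampleness from Proposition~\ref{prop:generation-datum} (which itself invokes Lemma~\ref{lem:rees-segre}), the two formulas from Corollaries~\ref{cor:intersection} and~\ref{cor:signature-formula}, and non-existence from Proposition~\ref{prop:obstruction}. Your check that $Q_{\cA}n^2+R_{\cA}>0$ before forming the ratio is a small, harmless refinement of the final step, which the paper treats as immediate.
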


\begin{proof}
Very ampleness follows from Proposition~\ref{prop:generation-datum}; in particular
$H_n^2>0$.  Thus the formula below also shows automatically that
$a^2>T$.  By Corollary~\ref{cor:intersection},
\[
 H_n^2=a^2A_n^2+E_n^2
       =(a^2-T)n^{k-3},
\]
while \eqref{eq:sig-family} is Corollary~\ref{cor:signature-formula}.
Condition \eqref{eq:family-threshold} is exactly $H_n^2<\sig(Y_n)$, so
Proposition~\ref{prop:obstruction} applies.  The final assertion is immediate
when $Q_{\cA}>0$.
\end{proof}

\begin{remark}\label{rem:not-ball-essential}
The ball-quotient property plays no role in Theorem~\ref{thm:kummer-family}.
Ball quotients are geometrically distinguished because they saturate the
Bogomolov--Miyaoka--Yau inequality \cite{Miyaoka1977,Yau1977} and therefore make
$\sig=K^2-8\chi$ especially transparent, but the infinite-family phenomenon is
driven by the quadratic growth in \eqref{eq:sig-family} and fixed-degree
Rees generation.
\end{remark}

\section{Two companion families}\label{sec:companions}

\subsection{The dual Hesse family}

The dual Hesse arrangement has
\[
        k=9,\qquad t_3=12,
\]
and its twelve triples form the affine plane $AG(2,3)$ on the nine lines.
Let
\[
        J=\bigcap_{T\in AG(2,3)}(z_i:i\in T).
\]

\begin{lemma}\label{lem:dual-hesse-degree5}
The ideal $J$ is generated by square-free monomials of degree $5$.  Consequently,
for every Kummer exponent $n\ge2$,
\[
        \cI_{\Sigma_n/X_n}(5)
\]
is globally generated and
\[
        H_n=6A_n-E_n
\]
is very ample.
\end{lemma}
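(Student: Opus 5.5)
The plan is to treat $J$ as a square-free monomial ideal and reduce the statement to a combinatorial fact about the affine plane $AG(2,3)$. Then I would transfer that fact to the Kummer surfaces using Lemma~\ref{lem:singular-ideal-general} and Lemma~\ref{lem:rees-segre}.

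\emph{Step 1 (monomial description).} An intersection of monomial prime ideals is again a square-free monomial ideal. A square-free monomial $\prod_{i\in S}z_i$ lies in $(z_i:i\in T)$ exactly when $S\cap T\neq\emptyset$. Hence $J$ is spanned by the monomials $M_S$, where $S$ ranges over the transversals (blocking sets) of the twelve lines of $AG(2,3)$, i.e.\ subsets of the nine points meeting every affine line. Its minimal generators are the $M_S$ with $S$ a minimal transversal. So the first assertion is equivalent to the statement that every transversal contains a transversal of size exactly $5$, and that no transversal has size $<5$.

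\emph{Step 2 (combinatorics of $AG(2,3)$).} I would pass to complements: $S$ is a transversal iff its complement $C$ contains no full line, i.e.\ $C$ is a cap. The maximal size of a cap in $AG(2,3)$ is $4$. Indeed, $AG(2,3)$ splits into four parallel classes of three lines each, and a cap of size $5$ would put at least two of its points on one line of every parallel class, contradicting an easy count. I would do this count explicitly, or simply note that the maximal cap in $AG(2,3)$ has size $4$. Therefore every transversal has size at least $5$.

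Conversely, I would show that every cap of size $\le3$ extends to a cap of size $4$. For at most two points this is immediate. For three non-collinear points, the three lines through pairs of them each contain exactly one further point, so at most $3+3=6$ points are forbidden. Any of the remaining $\ge3$ points can then be added without creating a full line. Taking complements, every transversal of size $\ge6$ contains a transversal of size $5$. This extension step is the only real content, and I expect it to be the main point requiring care; the counting above seems to settle it directly. Thus $J$ is generated by the degree-$5$ monomials $M_S$ with $|S|=5$ a transversal.

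\emph{Step 3 (geometry).} Since $J$ is generated by forms of degree $5$, the multiplication map $\bigoplus_S\OO_{\PP^8}\cdot M_S\to \widetilde{J}(5)$ is surjective. Restricting to $X_n$ and using Lemma~\ref{lem:singular-ideal-general}, which gives $J\OO_{X_n}=\cI_{\Sigma_n/X_n}$, the images of the $M_S$ generate $\cI_{\Sigma_n/X_n}(5)$ for every $n\ge2$.

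To avoid any issue about restricting a surjection of twisted ideal sheaves, I would instead check the axioms of Definition~\ref{def:generation-datum} directly for the family $\mathcal S$ of $5$-element transversals, and invoke Proposition~\ref{prop:generation-datum}:
\begin{itemize}[label=--]
\item (G1) holds by construction.
\item (G2): for a line $T$ and a point $i\in T$, I need a $5$-transversal $S$ with $S\cap T=\{i\}$. Equivalently, I need a $4$-cap containing $T\setminus\{i\}$ and avoiding $i$. This follows from the extension argument of Step 2 applied to the $2$-cap $T\setminus\{i\}$, observing that $i$ itself is forbidden.
\item (G3): the common zeros of the products $\prod_{i\in S}\ell_i$ are the points lying on a set of lines whose index set meets every transversal. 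That index set must contain a line of $AG(2,3)$, by the complement/cap correspondence. Hence these points are exactly the twelve triple points.
\end{itemize}
Proposition~\ref{prop:generation-datum} then gives global generation of $\cI_{\Sigma_n/X_n}(5)$ and very ampleness of $H_n=6A_n-E_n$.
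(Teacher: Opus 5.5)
Your proposal is correct and follows essentially the same route as the paper: you identify the minimal generators of $J$ with complements of maximal caps in $AG(2,3)$, show that every maximal cap has four points, and then verify (G1)--(G3) so that Proposition~\ref{prop:generation-datum} applies. The differences are cosmetic. You bound caps by a parallel-class count, which does need the explicit tally of secant lines, $4\cdot 2=8<\binom52=10$. You also derive (G2) and (G3) directly from the cap-extension step, whereas the paper uses explicit coordinates and the transitivity of $\mathrm{AGL}(2,3)$ on points and flags.
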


\begin{proof}
A monomial belongs to
$J=\bigcap_T(z_i:i\in T)$ exactly when its support meets every affine line
$T\subset AG(2,3)$.  Since membership depends only on the support, the minimal
monomial generators of $J$ are square-free and their supports are precisely the
inclusion-minimal vertex covers of the twelve-line hypergraph.  Complements of
minimal vertex covers are maximal caps, where a cap is a subset containing no
affine line.

We first show that every maximal cap has four points.  Any cap of cardinality at
most two extends to a three-point cap.  A three-point cap is non-collinear, so
after an affine change of coordinates it is
\[
        (0,0),\qquad (1,0),\qquad (0,1).
\]
The third points on the three affine lines determined by these pairs are
$(2,0)$, $(0,2)$ and $(2,2)$, so these are forbidden.  The remaining points are
$(1,1)$, $(1,2)$ and $(2,1)$; each can be added individually.  Any two of these
three candidates, together with respectively one of $(1,0)$, $(0,1)$ and
$(0,0)$, form an affine line.  Hence no cap has more than four points and every
cap of size at most three extends to one of size four.  Thus every maximal cap
has cardinality four, every minimal vertex cover has cardinality $9-4=5$, and
$J$ is generated by square-free monomials of degree five.

We next verify the local generation condition (G2).  The affine group
$\mathrm{AGL}(2,3)$ is transitive on incident point--line flags: translate the
chosen point to the origin and send the direction of the chosen affine line to
the horizontal direction.  It is therefore enough to treat one flag.  Take
\[
 T=\{(0,0),(1,0),(2,0)\},\qquad i=(2,0).
\]
Then
\[
 C=\{(0,0),(1,0),(0,1),(1,2)\}
\]
is a four-point cap.  Its complement $S$ is a minimal vertex cover of cardinality
five and satisfies $S\cap T=\{i\}$.  Flag transitivity gives the same conclusion
for every pair $(T,i)$ with $i\in T$.  Hence at a singular point over $T$ the
corresponding degree-five cover monomials localize to a unit times each of the
three coordinates $z_i$, $i\in T$, and generate the maximal ideal.

Finally we verify (G3).  A point of $\PP^2$ outside the twelve triple points lies
on at most one arrangement line.  If it lies on none, every cover monomial is
nonzero there.  If it lies on the single line indexed by $i$, choose a four-cap
containing $i$ (for example the cap $C$ above contains $(0,0)$, and point
transitivity gives such a cap through every $i$).  The complementary degree-five
cover $S$ avoids $i$, so its product of arrangement equations is nonzero at the
point.  By \eqref{eq:zero-coordinate-translation}, the corresponding monomial
$M_S$ is nonzero at every point of $X_n$ above it.  Thus the degree-five cover
monomials have no common zero away from $\Sigma_n$.

Conditions (G1)--(G3) are therefore satisfied in degree five, so
Proposition~\ref{prop:generation-datum} gives global generation of
$\cI_{\Sigma_n/X_n}(5)$, and Lemma~\ref{lem:rees-segre} gives the very ample
class $6A_n-E_n$.
\end{proof}

Here
\[
 f_0=12,\quad f_1=36,\quad T=12,
 \quad Q=6,\quad R=-39.
\]
Therefore
\begin{equation}\label{eq:dual-family}
 H_n^2=24n^6,
 \qquad
 \sig(Y_n)=(2n^2-13)n^6.
\end{equation}

\begin{corollary}[Dual Hesse family]\label{cor:dual-family}
For every $n\ge5$, the polarized surface $(Y_n,6A_n-E_n)$ carries no Ulrich
bundle of any rank, and
\[
        \frac{H_n^2}{\sig(Y_n)}=\frac{24}{2n^2-13}\longrightarrow0.
\]
At $n=5$ one has
\[
 K_{Y_5}=21A_5-3E_5=3(7A_5-E_5),
\]
$K_{Y_5}^2=3c_2(Y_5)$.  Since $6A_5-E_5$ is ample and $A_5$ is nef,
$7A_5-E_5$ is ample; hence $K_{Y_5}$ is ample and the equality case of
Miyaoka--Yau and its uniformization theorem
\cite{Miyaoka1977,Yau1977,GrebKebekusPeternellTaji} shows that $Y_5$ is a compact ball
quotient.  No identification with a particular classical dual-Hesse resolution
is used in the argument.
\end{corollary}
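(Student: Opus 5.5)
The plan follows the pattern of Theorem~\ref{thm:hesse-family}, with three ingredients: very ampleness, a numerical comparison, and an application of Proposition~\ref{prop:obstruction}.

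Very ampleness of $H_n=6A_n-E_n$ for every $n\ge2$ is already given by Lemma~\ref{lem:dual-hesse-degree5}. For the numerics I use Corollary~\ref{cor:intersection} with $k=9$ and $T=12$. This gives $A_n^2=n^6$, $E_n^2=-12n^6$ and $A_n\cdot E_n=0$, so
\[
H_n^2=(36-12)n^6=24n^6.
\]
For the signature, Corollary~\ref{cor:signature-formula} needs $f_0=12$, $f_1=36$ and $t_2=0$. The value $t_2=0$ follows from the incidence identity \eqref{eq:pair-identity}: $12\binom32=36=\binom92$, so every pair of lines meets at a triple point. Then $Q=3-9+12=6$ and $R=9-36-12=-39$, hence $\sig(Y_n)=(2n^2-13)n^6$, which is \eqref{eq:dual-family}.

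The strict inequality $H_n^2<\sig(Y_n)$ is equivalent to $24<2n^2-13$, i.e.\ $n^2>18.5$, i.e.\ $n\ge5$. Proposition~\ref{prop:obstruction} then excludes Ulrich bundles of every rank. The ratio formula follows directly from these two expressions.

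For the ball-quotient statement at $n=5$, I apply \eqref{eq:K-general} with $m_5=6\cdot5-9=21$ and $c_{5,3}=(5-1)-1=3$. This gives
\[
K_{Y_5}=21A_5-3E_5=3(7A_5-E_5).
\]
I then check $K^2=3c_2$ using \eqref{eq:K2-direct} and \eqref{eq:c2-closed}:
\[
K^2/n^6=21^2-9\cdot12=333,
\]
\[
c_2/n^6=25(3-18+36-12)+5(18-72+24)+36=225-150+36=111,
\]
so indeed $333=3\cdot111$. Ampleness of $7A_5-E_5=(6A_5-E_5)+A_5$ holds because an ample class plus a nef class is ample. The ampleness of $6A_5-E_5$ comes from its very ampleness, and $A_5$ is nef as a pullback of a very ample class. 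The Miyaoka--Yau equality case then yields a compact ball quotient.

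The argument is routine arithmetic once the earlier results are in place. The only step needing care is the incidence count, in particular that $t_2=0$ (so that $R$ is computed correctly), together with keeping the sign of the discrepancy $c_{n,3}$ straight in the canonical class.
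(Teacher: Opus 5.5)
Your proposal is correct and follows essentially the paper's own argument. Very ampleness comes from Lemma~\ref{lem:dual-hesse-degree5}, $H_n^2=24n^6$ and $\sig(Y_n)=(2n^2-13)n^6$ come from Corollaries~\ref{cor:intersection} and~\ref{cor:signature-formula}, Proposition~\ref{prop:obstruction} applies for $n\ge5$, and at $n=5$ you use \eqref{eq:K-general}, ampleness of $(6A_5-E_5)+A_5$, and the Miyaoka--Yau equality case, exactly as the paper does. Your explicit derivation of $t_2=0$ from \eqref{eq:pair-identity} and your check $K_{Y_5}^2/5^6=333=3\cdot111=3c_2(Y_5)/5^6$ usefully verify numerics that the paper only states.
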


\subsection{The complete quadrangle}

Let $p_1,p_2,p_3,p_4\in\PP^2$ be four points in general position and let
$L_{ij}$, $1\le i<j\le4$, be the six joining lines.  The resulting complete
quadrangle has
\[
        k=6,\qquad t_2=3,\qquad t_3=4.
\]
The four triple points are $p_1,\ldots,p_4$.

Consider the three quadratic monomials corresponding to the three perfect
matchings of $K_4$:
\[
 g_1=z_{12}z_{34},\qquad
 g_2=z_{13}z_{24},\qquad
 g_3=z_{14}z_{23}.
\]

\begin{lemma}\label{lem:quadrangle-generation}
For every $n\ge2$, the three quadrics $g_1,g_2,g_3$ globally generate
$\cI_{\Sigma_n/X_n}(2)$.  Hence
\[
        H_n=3A_n-E_n
\]
is very ample.
\end{lemma}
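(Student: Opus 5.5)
The plan is to verify that the three matching monomials form a degree-$2$ generation datum in the sense of Definition~\ref{def:generation-datum}, and then invoke Proposition~\ref{prop:generation-datum} with $\nu=2$. That proposition gives global generation of $\cI_{\Sigma_n/X_n}(2)$ for every $n\ge2$. Very ampleness of $3A_n-E_n$ then follows from Lemma~\ref{lem:rees-segre}.

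First, record the incidence structure. The triple point $p_i$ lies on exactly the three lines $L_{ij}$ with $j\ne i$, so $T_{p_i}=\{\{i,j\}:j\ne i\}$. The three double points are the diagonal points $L_{12}\cap L_{34}$, $L_{13}\cap L_{24}$ and $L_{14}\cap L_{23}$.

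For (G1), each perfect matching $\{\{a,b\},\{c,d\}\}$ of $K_4$ covers every vertex $i$ exactly once. Hence each support $S$ meets every $T_{p_i}$, and in fact $|S\cap T_{p_i}|=1$.

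For (G2), fix $i$ and a line $L_{ij}\ni p_i$. The unique matching containing the edge $\{i,j\}$ has $S\cap T_{p_i}=\{\{i,j\}\}$. So, at a point $q$ over $p_i$, the three $g$'s localize to units times the three vanishing coordinates $z_{ij}$, $j\ne i$. By Lemma~\ref{lem:singular-ideal-general} these coordinates generate $\mathfrak m_q$.

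For (G3), which I expect to be the only step needing an actual argument, I must show that the common zero locus in $\PP^2$ of the three conics $\ell_{12}\ell_{34}$, $\ell_{13}\ell_{24}$, $\ell_{14}\ell_{23}$ is exactly $\{p_1,\dots,p_4\}$. These are the three degenerate members of the pencil of conics through four general points. Suppose a point $x$ lies on all three conics. Then $x$ lies on one line from each matching, so it lies on three lines $L_e,L_f,L_g$, one from each matching.

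If two of the chosen edges are disjoint, they form a single matching, which contradicts choosing one edge per matching. Hence any two of $e,f,g$ share a vertex. Three pairwise-intersecting edges of $K_4$ either share a common vertex or form a triangle.

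In the triangle case, say $\{1,2\},\{1,3\},\{2,3\}$, the three lines $L_{12},L_{13},L_{23}$ are the sides of the triangle $p_1p_2p_3$. These have no common point, because the points are in general position. So the edges share a common vertex $i$, and $x=p_i$, since the lines through $p_i$ meet only there. Conversely, each $p_i$ lies on all three conics by (G1).

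Finally, the translation \eqref{eq:zero-coordinate-translation} shows that the $g_j$ have no common zero on $X_n$ outside $\Sigma_n$. All hypotheses of Proposition~\ref{prop:generation-datum} hold, which concludes the argument.
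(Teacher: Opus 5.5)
Your proof is correct and follows the paper's route: you check that the three perfect-matching monomials form a degree-$2$ generation datum and then apply Proposition~\ref{prop:generation-datum} with $\nu=2$. The only difference is that you justify (G3) explicitly: pairwise-intersecting edges of $K_4$ form either a star or a triangle, and the triangle case is ruled out by general position. The paper simply asserts that the common zero locus is the four vertices, adding only remarks about the double points and the generic points of the lines.
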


\begin{proof}
At a vertex $p_i$, exactly three arrangement lines vanish.  Each perfect matching
contains exactly one of those three lines, so $g_1,g_2,g_3$ localize to units
times the three local Kummer coordinates and generate the maximal ideal.
Their common zero locus on $\PP^2$ is exactly the four vertices.  The three
diagonal points are precisely the three double points of the arrangement; by
Proposition~\ref{prop:local} they are smooth on $X_n$ and do not belong to
$\Sigma_n$.  At each such double point only one matching monomial vanishes, while
on a generic point of an arrangement line only the monomial containing that line
vanishes.  Thus
Proposition~\ref{prop:generation-datum} applies with $\nu=2$.
\end{proof}

For this arrangement
\[
 f_0=7,\quad f_1=18,\quad T=4,
 \quad Q=1,\quad R=-10,
\]
and hence
\begin{equation}\label{eq:quadr-family}
 H_n^2=5n^3,
 \qquad
 \sig(Y_n)=\frac{n^3}{3}(n^2-10).
\end{equation}

\begin{corollary}[Complete quadrangle family]\label{cor:quadr-family}
For every $n\ge6$, the polarized surface $(Y_n,3A_n-E_n)$ carries no Ulrich
bundle of any rank, and
\[
        \frac{H_n^2}{\sig(Y_n)}=\frac{15}{n^2-10}\longrightarrow0.
\]
For $n=5$ one has the equality
\[
        H_5^2=\sig(Y_5)=625.
\]
Moreover
\[
        K_{Y_5}=9A_5-3E_5=3H_5.
\]
Hence $K_{Y_5}$ is ample and $K_{Y_5}^2=9H_5^2=9\sig(Y_5)$.  Since
$\sig=(K^2-2c_2)/3$, this equality is equivalent to $K_{Y_5}^2=3c_2(Y_5)$;
the equality case of Miyaoka--Yau and its uniformization theorem
\cite{Miyaoka1977,Yau1977,GrebKebekusPeternellTaji} therefore
shows that $Y_5$ is a compact ball quotient.
Thus the exponent-$5$ complete-quadrangle member realizes, in higher Picard
rank, the numerical boundary configuration $K=3H$ and $H^2=\sig$ appearing in
Beauville's discussion.
\end{corollary}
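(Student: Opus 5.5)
The corollary has four parts: the non-existence range $n\ge6$, the ratio, the $n=5$ boundary numerics, and the ball-quotient conclusion. I will treat them in that order, following the pattern of Theorem~\ref{thm:hesse-family} and Corollary~\ref{cor:dual-family}.

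\emph{Non-existence and ratio.} Lemma~\ref{lem:quadrangle-generation} shows that $H_n=3A_n-E_n$ is very ample for all $n\ge2$. Corollary~\ref{cor:intersection} gives $A_n^2=n^3$, $E_n^2=-4n^3$ and $A_n\cdot E_n=0$, so
\[
H_n^2=9n^3-4n^3=5n^3.
\]
For the signature, I substitute $k=6$, $t_2=3$, $t_3=4$, $f_0=7$, $f_1=18$ into \eqref{eq:QR}. This gives $Q=3-6+4=1$ and $R=6+9-18-7=-10$. Hence \eqref{eq:signature-formula} gives \eqref{eq:quadr-family}. The strict inequality $H_n^2<\sig(Y_n)$ is then equivalent to $15<n^2-10$, that is, $n^2>25$, that is, $n\ge6$. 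Proposition~\ref{prop:obstruction} therefore excludes Ulrich bundles of every rank in that range. The ratio is $5n^3\big/\bigl(\tfrac{n^3}{3}(n^2-10)\bigr)=15/(n^2-10)$, which tends to $0$.

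\emph{The case $n=5$.} Here $H_5^2=625$ and $\sig(Y_5)=\tfrac{125}{3}\cdot15=625$, so the two are equal. For the canonical class I use \eqref{eq:K-general}. We have $m_5=(6-3)\cdot5-6=9$, and the only nonempty stratum is $r=3$, with $c_{5,3}=(3-2)(5-1)-1=3$. Hence
\[
K_{Y_5}=9A_5-3E_5=3H_5.
\]
Since $H_5$ is very ample, $K_{Y_5}$ is ample, and $K_{Y_5}^2=9H_5^2=9\sig(Y_5)$.

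\emph{Ball quotient.} Write $\sig=(K^2-2c_2)/3$. The identity $K^2=9\sig$ becomes $K^2=3K^2-6c_2$, which is equivalent to $K^2=3c_2$. As a cross-check I would also verify $c_2(Y_5)=1875$ directly from \eqref{eq:c2-closed}: the bracket is $25\cdot(-2)+5\cdot14+15=35$, wait — let me be careful here. The coefficients are $3-2k+f_1-f_0=3-12+18-7=2$, then $2k-2f_1+2f_0=12-36+14=-10$, and $f_1-t_2=15$. This gives $c_2/n^3=50-50+15=15$, so $c_2(Y_5)=15\cdot125=1875$, and indeed $3\cdot1875=5625=9\cdot625$.

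With $K_{Y_5}$ ample and $K^2=3c_2$, the equality case of Miyaoka--Yau \cite{Miyaoka1977,Yau1977,GrebKebekusPeternellTaji} shows that $Y_5$ is a compact ball quotient.

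The only real risk is arithmetic: the sign and coefficient bookkeeping in $Q$, $R$ and $c_{n,r}$, and the boundary case $n=5$, where the inequality becomes an equality. No conceptual obstacle is expected, since every ingredient is an earlier result.
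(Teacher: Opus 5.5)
Your proposal is correct and follows essentially the same route as the paper: very ampleness from Lemma~\ref{lem:quadrangle-generation}, $H_n^2=5n^3$ and $\sig(Y_n)=\tfrac{n^3}{3}(n^2-10)$ from Corollaries~\ref{cor:intersection} and~\ref{cor:signature-formula} with $Q=1$, $R=-10$, Proposition~\ref{prop:obstruction} for $n\ge6$, and at $n=5$ the identity $K_{Y_5}=3H_5$ from \eqref{eq:K-general}, which turns $K^2=9\sig$ into $K^2=3c_2$ before Miyaoka--Yau is invoked. In the final write-up, delete the aborted line with the wrong coefficients (your corrected value $c_2(Y_5)=15\cdot5^3=1875$ is right). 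If you want to justify the phrase ``higher Picard rank'' explicitly, note that $A_5^2=125$, $A_5\cdot E_5=0$ and $E_5^2=-500$ make $A_5$ and $E_5$ numerically independent.
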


\section{Comparison of the three families}

For convenience we collect the normalized formulas.  In every row $A_n^2=n^{k-3}$.

\begin{center}
\begin{tabular}{>{\raggedright\arraybackslash}p{4.1cm}cccc}
\toprule
Arrangement & very ample $H_n$ & $H_n^2/A_n^2$ & $\sig(Y_n)/A_n^2$ & no-go range\\
\midrule
Complete quadrangle & $3A_n-E_n$ & $5$ & $(n^2-10)/3$ & $n\ge6$\\
Dual Hesse & $6A_n-E_n$ & $24$ & $2n^2-13$ & $n\ge5$\\
Hesse & $4A_n-E_n$ & $7$ & $3n^2-11$ & $n\ge3$\\
\bottomrule
\end{tabular}
\end{center}

The same comparison is especially transparent in terms of the Chern slope
$K^2/\chi(\OO)$.  Directly from Proposition~\ref{prop:chern} one obtains
\[
\begin{array}{c|c|c}
\text{Arrangement}
& K_{Y_n}^2/\chi(\OO_{Y_n})
& \displaystyle\lim_{n\to\infty}K_{Y_n}^2/\chi(\OO_{Y_n})\\
\hline
\text{Complete quadrangle}
& \displaystyle\frac{60(n-2)^2}{7n^2-30n+35}
& \displaystyle\frac{60}{7}\approx 8.5714\\[6pt]
\text{Dual Hesse}
& \displaystyle\frac{12(8n^2-20n+11)}{11n^2-30n+23}
& \displaystyle\frac{96}{11}\approx 8.7273\\[6pt]
\text{Hesse}
& \displaystyle\frac{36(n-1)(5n-7)}{21n^2-54n+37}
& \displaystyle\frac{60}{7}\approx 8.5714
\end{array}
\]
Thus, throughout the ranges relevant to the counterexamples, the surfaces lie in
the Beauville--BMY strip
\[
        8<\frac{K_{Y_n}^2}{\chi(\OO_{Y_n})}\le9.
\]
The upper boundary is attained exactly at the three ball-quotient members: Hesse
at $n=3$, dual Hesse at $n=5$, and the complete quadrangle at $n=5$.  Indeed,
with the denominators displayed above,
\[
\begin{aligned}
9-\frac{K_{Y_n}^2}{\chi(\OO_{Y_n})}
&=\frac{9(n-3)^2}{21n^2-54n+37} &&\text{(Hesse)},\\
&=\frac{3(n-5)^2}{11n^2-30n+23} &&\text{(dual Hesse)},\\
&=\frac{3(n-5)^2}{7n^2-30n+35} &&\text{(complete quadrangle)}.
\end{aligned}
\]
On the other hand $K^2/\chi-8=\sig/\chi$, so positivity of the signature is
exactly what places the examples above the lower boundary of this strip.

Each family contains infinitely many pairwise non-isomorphic surfaces, since the
Chern numbers grow with $n$.  The Hesse family is the most economical from the
viewpoint of the Rees construction: four cubic sections already generate the
singular ideal.  The complete quadrangle is conceptually complementary: its
first ball-quotient member lies exactly on the Beauville boundary, while the
higher Kummer exponents move immediately into the strict non-existence region.

\section{Ulrich sheaves, known existence results, and scope}\label{sec:scope}

The conclusions above concern the specified very ample classes $H_n$; they do
not assert non-existence for every polarization on the same underlying surface.
This is exactly the fixed-embedding character of the ESW problem.  Indeed,
Coskun--Huizenga prove that for every smooth complex projective surface $X$ and
every ample divisor $H$, the pairs $(X,mH)$ admit rank-two Ulrich bundles for
$m\gg0$ \cite[Theorem~1.2]{CoskunHuizenga}.  Our examples show that an individual
very ample embedding can nevertheless have infinite Ulrich complexity.

The passage from bundles to the sheaf formulation is elementary but important
for the statement of the main theorem.

\begin{lemma}[Full-support Ulrich sheaves on smooth surfaces]\label{lem:full-support-bundle}
Let $(S,H)$ be a smooth polarized surface.  Every Ulrich sheaf on $S$ whose support
is all of $S$ is locally free.
\end{lemma}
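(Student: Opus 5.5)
The plan is to pass from the Ulrich condition to maximal Cohen--Macaulay modules and then use the Auslander--Buchsbaum formula over the regular local rings of $S$. Let $\cE$ be an Ulrich sheaf on $S\subset\PP^N$ with $\operatorname{Supp}\cE=S$.

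\emph{Step 1: $\cE$ is arithmetically Cohen--Macaulay of full dimension.} By the standard characterization in \cite[Proposition~2.1]{ESW}, the pushforward of an Ulrich sheaf to $\PP^N$ has a linear minimal free resolution of length $N-\dim\operatorname{Supp}\cE$. Equivalently, the module $M=\bigoplus_t H^0(\cE(t))$ is a maximal Cohen--Macaulay module over the polynomial ring, of dimension $\dim\operatorname{Supp}\cE+1$. Since the support is all of $S$, $M$ has depth $3$ and dimension $3$.

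\emph{Step 2: localize.} Localizing and sheafifying, we get that for every point $x\in S$ (closed or not) the stalk $\cE_x$ is a Cohen--Macaulay $\OO_{S,x}$-module with
\[
  \dim\cE_x=\dim\OO_{S,x}.
\]
Two ingredients are needed here:
\begin{itemize}
\item Cohen--Macaulayness localizes.
\item Because the support is all of the irreducible surface $S$, $\operatorname{Supp}\cE_x=\operatorname{Spec}\OO_{S,x}$, so the dimension is maximal.
\end{itemize}
Hence $\operatorname{depth}\cE_x=\dim\OO_{S,x}$.

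An alternative route to the same conclusion: the Ulrich vanishings give $H^1(\cE(t))=0$ for all $t$, since the Beilinson or linear-resolution description yields an intermediate-cohomology-free module. Then $\cE$ has no zero-dimensional torsion, because such torsion would contribute to $H^0$ in very negative twists. It also satisfies the local depth-two condition at closed points, which one sees via local cohomology $H^1_{\mathfrak m}$ matching $\bigoplus_t H^1(\cE(t))=0$.

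\emph{Step 3: Auslander--Buchsbaum.} Since $S$ is smooth, $\OO_{S,x}$ is regular. Therefore $\cE_x$ has finite projective dimension, and
\[
  \operatorname{pd}\cE_x=\operatorname{depth}\OO_{S,x}-\operatorname{depth}\cE_x=0.
\]
So $\cE_x$ is free for every $x$, and $\cE$ is locally free.

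The main obstacle is Step 1: making precise that "Ulrich sheaf" in the ESW sense already entails the maximal Cohen--Macaulay property along its support. The point to check carefully is the definition. An Ulrich sheaf $\cF$ on $X$ is one whose pushforward has a linear resolution $0\to\OO^{b_{c}}(-c)\to\cdots\to\OO^{b_0}\to\cF\to0$ of length equal to the codimension $c=N-\dim\operatorname{Supp}\cF$. From this, Auslander--Buchsbaum over the polynomial ring gives the depth of $M$ as $N+1-c=3$. This is exactly what feeds Step 2.

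The full-support hypothesis enters only in Step 2. It is essential: a sheaf supported on a curve of $S$ could be Ulrich for that curve without being a bundle on $S$.
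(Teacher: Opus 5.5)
Your proposal is correct and follows the paper's own argument. The Ulrich condition makes $\cE$ arithmetically Cohen--Macaulay, full support makes each stalk $\cE_x$ maximal Cohen--Macaulay over the regular local ring $\OO_{S,x}$, and Auslander--Buchsbaum gives freeness; your Step~1 just makes the ACM property explicit via the linear resolution of \cite[Proposition~2.1]{ESW}. Two small wording fixes: $M$ is a Cohen--Macaulay module of dimension $3$ over the polynomial ring (i.e.\ maximal Cohen--Macaulay over the homogeneous coordinate ring of $S$, not over the polynomial ring itself), and in your aside the relevant group is $H^2_{\mathfrak m}(M)\cong\bigoplus_t H^1(\cE(t))$, not $H^1_{\mathfrak m}$.
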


\begin{proof}
An Ulrich sheaf is arithmetically Cohen--Macaulay, hence Cohen--Macaulay of full
dimension.  Its stalk at every point is therefore maximal Cohen--Macaulay over the
regular local ring $\OO_{S,x}$.  By Auslander--Buchsbaum it has projective dimension
zero and is free.  Equivalently, one may apply
\cite[Lemma~30.11.5, Tag~0B3K]{StacksCM}.
\end{proof}

Consequently, on our smooth $Y_n$, excluding Ulrich bundles excludes exactly the
full-support Ulrich sheaves in the standard scheme-theoretic ESW formulation
\cite[p.~543]{ESW}\cite[Proposition~4.2 and Problem~4.3]{HanselkaKummer}.

There is no conflict with standard positive existence theorems.  The projective
models $X_n$ are singular complete intersections, whereas the polarizations used
on the resolutions are $H_n=(\nu+1)A_n-E_n$; Ulrichness is not a birational
invariant.  Likewise, results transferring Ulrich bundles under blow-ups at
smooth points \cite{CasnatiKim,Secci} do not apply to the present resolutions,
whose centers are singular points and whose exceptional curves have positive
genus in the principal examples.

The relation with Beauville's discussion is geometric as well as numerical.
Ball quotients with $K=3L$ satisfy $L^2=\sig$.  The Hesse and dual-Hesse members at
$n=3$ and $n=5$ respectively admit very ample classes $L-A$ strictly below that
boundary, while the complete quadrangle at $n=5$ realizes the same numerical
boundary in higher Picard rank.  The infinite families show that this off-ray
mechanism is not confined to a single ball quotient but persists throughout
Kummer-cover families.

There is a distinct non-existence phenomenon in local commutative algebra.
Yhee gave the first counterexamples to the existence of Ulrich modules for
complete local domains, in every dimension at least two \cite{Yhee2023}.
Iyengar--Ma--Walker--Zhuang subsequently constructed two-dimensional
Cohen--Macaulay local rings with no Ulrich modules, including Gorenstein normal
domains \cite{IyengarMaWalkerZhuang}; complementary positive results for a large
class of two-dimensional local rings were obtained by Iyengar--Ma--Walker
\cite{IyengarMaWalker2025}.  These are local, singular phenomena and are distinct
from the smooth polarized projective question considered here.  For broader
background on Ulrich bundles and modules we refer to
\cite{CostaMiroRoigPonsLlopis}.

Recent work has also produced strong finite-rank obstructions.
Lopez--Raychaudhury exclude Ulrich bundles of rank at most three on broad classes
of complete intersections \cite{LopezRaychaudhury2026}, while
Lopez--Raychaudhury--Takahashi obtain a linear lower bound for the Ulrich
complexity of hypersurfaces in terms of their dimension
\cite{LopezRaychaudhuryTakahashi2026}.  These results provide quantitative or
bounded-rank non-existence, rather than examples of infinite Ulrich complexity.
General bidouble planes provide another nearby testing ground:
Caro--Cruz-Penagos--Troncoso determine Picard numbers for broad families and derive
constraints on Ulrich line bundles and Ulrich complexity
\cite{CaroCruzPenagosTroncoso}.  Their results do not produce all-ranks
non-existence of the type constructed here.

Recent papers still formulate the general Ulrich existence question as open
\cite{MiroRoig2026,Bordoni2025}.  We are not aware of previously published
smooth polarized varieties for which Ulrich bundles of every rank are excluded.
Accordingly, the priority claim made here is deliberately phrased ``to the best
of our knowledge'' and concerns fixed polarized pairs.

\section{Concluding perspective}

The paper has two main conclusions.  First, the fixed-embedding ESW existence
problem has a negative answer already for smooth complex projective surfaces.  The
Hesse ball quotient with the very ample class $H=4A-E$ satisfies
$H^2<\sig(Y)$ and therefore carries no full-support Ulrich sheaf for that embedding.
Equivalently, the Chow form of this embedding has no linear determinantal
representation arising from an Ulrich sheaf on the embedded surface through the ESW construction.  The
geometry explains how the example realizes Beauville's numerical strategy: on the
canonical ray $L=K/3$ one has $L^2=\sig$, while the extra N\'eron--Severi direction
allows the off-ray move $L\mapsto L-A$ without losing very ampleness.

Second, the counterexample is a systematic phenomenon rather than an isolated
polarization on one classical surface.  The Hesse construction gives infinitely many
pairwise non-isomorphic smooth ESW counterexamples, and the general Kummer mechanism
produces further infinite families.  Three pieces are responsible:
\begin{enumerate}
\item the rank-independent inequality $H^2\ge\sig(S)$ forced by every Ulrich bundle
      on a smooth polarized surface;
\item a Rees-algebra/Segre mechanism converting low-degree generation of the singular
      ideal into explicit very ample classes on Kummer resolutions;
\item a signature formula whose leading term grows like $n^{k-1}$, while the
      polarization square grows only like $n^{k-3}$.
\end{enumerate}
Consequently $H_n^2/\sig(Y_n)\to0$ in the Hesse, dual-Hesse, and complete-quadrangle
families: the violation of the Ulrich threshold becomes arbitrarily large in relative
terms.

Several questions are left open by this picture.  Is the necessary inequality
$H^2\ge\sig(S)$ close to sufficient under natural hypotheses?  Which regions of the
very ample cone of a fixed surface can have infinite Ulrich complexity?  Which
incidence hypergraphs yield the low-degree cover ideals needed by the Kummer
construction?  Finally, it would be interesting to find higher-dimensional numerical
obstructions playing the role of the surface inequality used here.

\section*{Acknowledgements}
The author thanks Arnaud Beauville for an illuminating e-mail exchange following his 2017 Beijing talk on Ulrich bundles, and the organizers of the 2017 conference in Beijing for the financial support that made the author's participation possible.

\section*{Statements and declarations}
\noindent\textbf{Funding.}
The author received no research funding for this work.

\medskip
\noindent\textbf{Competing interests.}
The author has no financial interests to disclose.

\medskip
\noindent\textbf{Use of generative AI.}
ChatGPT and Claude were used for exploratory computations, literature search and editorial assistance. All proofs are self-contained and the author is solely responsible for the final text.

\end{document}